\newcolumntype{"}{@{\hskip\tabcolsep\vrule width 1pt\hskip\tabcolsep}}
\def\Z{\mathbb{Z}}
\def\N{\mathbb{N}}
\definecolor{vividviolet}{rgb}{0.62, 0.0, 1.0}
\def\nt{\noindent}
\def\ms{\medskip}
\def\rsq{\hspace*{\fill}$\blacksquare$\medskip}
\newtheoremstyle{de}%name
  {10pt}          % space above
  {10pt}  % space below
  {\rm}  % bofy font
  {}%{\parindent}     % ident - empty=no indent,  \parindent= paragraph indent
  {\bf}  % thm head font
  {. }    % punctuation after thm head
  { }    % space after thm head: `` ``=normal \newline=linebreak
  {}     % thm head specification
\theoremstyle{de}
\newtheorem{de}{Definition}[section]
\newtheorem{example}{Example}[section]
\newtheorem{rem}[de]{Remark}
\newtheoremstyle{theorem}%name
  {10pt}          % space above
  {10pt}  % space below
  {\it}  % bofy font
  {}%{\parindent}     % ident - empty=no indent,  \parindent= paragraph indent
  {\bf}  % thm head font
  {. }    % punctuation after thm head
  { }    % space after thm head: `` ``=normal \newline=linebreak
  {}     % thm head specification
\theoremstyle{theorem}
\numberwithin{equation}{section}
\def\Z{\mathbb{Z}}
\def\N{\mathbb{N}}
\newtheorem{theorem}{Theorem}[section]
\newtheorem{lemma}[theorem]{Lemma}%[section]
\newtheorem{corollary}[theorem]{Corollary}%[section]
\newtheorem{conjecture}{Conjecture}[section]
\numberwithin{equation}{section}
\begin{document}
\baselineskip18truept
\normalsize
\begin{center}
{\mathversion{bold}\Large \bf On bridge graphs with local antimagic chromatic number 3}

\bigskip
{\large W.C. Shiu$^{a}$, G.C. Lau$^{b}$ and R.X. Zhang{$^c$}\footnote{Corresponding author.}}\\

\medskip

\emph{{$^a$}Department of Mathematics,\\ The Chinese University of Hong Kong,}\\
\emph{Shatin, Hong Kong.}\\
\emph{wcshiu@associate.hkbu.edu.hk}\\

\medskip

\emph{{$^b$}College of Computing, Informatics and Media,\\ Universiti Teknologi MARA,}\\
\emph{Johor Branch, Segamat Campus, 85000 Malaysia.}\\
\emph{geeclau@yahoo.com}\\

\medskip

\emph{{$^c$}School of Mathematics and Statistics, \\Qingdao University, Qingdao 266071 China.}\\
\emph{rx.zhang87@qdu.edu.cn}
\end{center}

\medskip
\begin{abstract}
Let $G=(V, E)$ be a connected graph.
A bijection $f: E\to \{1, \ldots, |E|\}$ is called a local antimagic labeling
if for any two adjacent vertices $x$ and $y$,
$f^+(x)\neq f^+(y)$, where $f^+(x)=\sum_{e\in E(x)}f(e)$
and $E(x)$ is the set of edges incident to $x$.
Thus a local antimagic labeling induces a proper vertex coloring of $G$,
where the vertex $x$ is assigned the color $f^+(x)$.
The local antimagic chromatic number $\chi_{la}(G)$ is the minimum number of colors
taken over all colorings induced by local antimagic labelings of $G$.
In this paper, we present some families of bridge graphs with $\chi_{la}(G)=3$ and give several ways to
construct bridge graphs with $\chi_{la}(G)=3$.

\medskip
\noindent Keywords: Local antimagic labeling, local antimagic chromatic number, $s$-bridge graphs
\medskip

\noindent 2010 AMS Subject Classifications: 05C78, 05C69.
\end{abstract}
%\normalsize

\tolerance=10000
%\baselineskip12truept
%\newbox\thebox
%\global\setbox\thebox=\vbox to 0.2truecm{\hsize
%0.15truecm\noindent\hfill}
%\def\boxit#1{\vbox{\hrule\hbox{\vrule\kern0pt
%     \vbox{\kern0pt#1\kern0pt}\kern0pt\vrule}\hrule}}
%\def\qed{\lower0.1cm\hbox{\noindent \boxit{\copy\thebox}}}
\def\qed{\hspace*{\fill}$\Box$\medskip}

\def\s{\,\,\,}
\def\ss{\smallskip}
\def\ms{\medskip}
\def\bs{\bigskip}
\def\c{\centerline}
\def\nt{\noindent}
\def\ul{\underline}
\def\lc{\lceil}
\def\rc{\rceil}
\def\lf{\lfloor}
\def\rf{\rfloor}
\def\a{\alpha}
\def\b{\beta}
\def\n{\nu}
\def\o{\omega}
\def\ov{\over}
\def\m{\mu}
\def\t{\tau}
\def\th{\theta}
\def\k{\kappa}
\def\l{\lambda}
\def\L{\Lambda}
\def\g{\gamma}
\def\d{\delta}
\def\D{\Delta}
\def\e{\epsilon}
\def\lg{\langle}
\def\rg{\tongle}
\def\p{\prime}
\def\sg{\sigma}
\def\to{\rightarrow}

\newcommand{\K}{K\lower0.2cm\hbox{4}\ }
\newcommand{\cl}{\centerline}
\newcommand{\om}{\omega}
\newcommand{\ben}{\begin{enumerate}}

\newcommand{\een}{\end{enumerate}}
\newcommand{\bit}{\begin{itemize}}
\newcommand{\eit}{\end{itemize}}
\newcommand{\bea}{\begin{eqnarray*}}
\newcommand{\eea}{\end{eqnarray*}}
\newcommand{\bear}{\begin{eqnarray}}
\newcommand{\eear}{\end{eqnarray}}

\section{Introduction}
For a connected graph $G=(V, E)$, a {\it local antimagic labeling} is a bijection
$f: E\to \{1, \ldots, |E|\}$ such that $f^+(u)\neq f^+(v)$ whenever the vertices $u$ and $v$ are adjacent in $G$, where $f^+(u)$ the sum of the labels of the edges incident to $u$.
Thus a local antimagic edge labeling $f$ of $G$ induces a vertex labeling $f^+: V\to \mathbb{Z}$
given by $f^+(u)=\sum_{u\in e\in E}f(e)$ such that for any two adjacent vertices with distinct labels.
This vertex labeling is called an {\it induced vertex labeling},
and the labels assigned to vertices are called {\it induced colors} under $f$ (or colors, for short, if no ambiguity occurs).
A connected graph $G$ is said to be {\it local antimagic} if it admits a local antimagic edge labeling.
Clearly, if $G$ is local antimagic, then $|V(G)|\geq 3$.
The {\it color number} of a local antimagic labeling $f$ is the number of distinct induced colors under $f$, denoted by $c(f)$, and $f$ is also called a {\it local antimagic $c(f)$-coloring}.
The {\it local antimagic chromatic number} of a graph $G$, denoted by $\chi_{la}(G)$,
is the minimum $c(f)$ over all local antimagic labelings $f$ of $G$.
Thus, $2\leq \chi_{la}(G)\leq |V(G)|$.

Local antimagic labeling was firstly proposed by Arumugam et al. \cite{Aru} in 2017.
Since then, many scholars have studied the local antimagic chromatic numbers of different types of graphs, such as Nazulaa \cite{Naz}, Lau \cite{Lau1, Lau2, LSN-IJMSI,LauShiuSoo,LSZPN}, Shaebani \cite{Sha} and so on.
Recently, Lau et al. characterized $s$-bridge graphs with local antimagic chromatic number 2 in \cite{LSZPN}.
A {\it bridge graph} (or precisely {\it $s$-bridge graph}),
denoted by $\theta(a_1,a_2,\ldots,a_s)$,
is a graph consisting of $s$ edge-disjoint $(u, v)$-paths of lengths $a_1, a_2, \ldots, a_s$,
where $s\geq 2$ and $1\leq a_1\leq a_2\leq \cdots \leq a_s$.
In the this paper, we shall further study $s$-bridge graphs $\th_s$
and present some families of $s$-bridge graphs with local antimagic chromatic number $3$.

Throughout this paper, we use $a^{[n]}$ to denote a sequence of length $n$ in which all items are $a$, where $n\ge 2$. For integers $1\le a < b$, we let $[a,b]$ denote the set of integers from $a$ to $b$. All notions and notation not defined in this paper are referred to the book \cite{Bondy}.

\section{$s$-bridge graphs with size $2s+2$}\label{sec-2s+2}
In this section, we shall study a family of $s$-bridge graphs of size $2s+2$
and show that their local antimagic chromatic number is $3$.

Firstly, we recall some known results regarding $s$-bridge graphs of size greater than $2s+2$.

\begin{theorem}[{\cite[Theorem~2.3]{LSZPN}}]\label{thm-chilath=2}  For $s\ge 3$, $\chi_{la}(\th_s) = 2$ if and only if either $\th_s=K_{2,s}$ with even $s\ge 4$ or
$\th_s$ is one of the following graphs of size greater than $2s+2$:
\begin{itemize}
\item[(1)]$\th(4l^{[3l+2]}, (4l+2)^{[l]})$, where $l\ge 1$;
\item[(2a)]$\th(2l-2, (4l-2)^{[3l-1]})$, where $l\ge 2$;
\item[(2b)]$\th(2, 4^{[3]}, 6)$; $\th(4, 8^{[5]}, 10^{[2]})$; $\th(6, 12^{[7]}, 14^{[3]})$;
\item[(3a)]$\th(4l-2-2t, 2t, (4l-4)^{[l]}, (4l-2)^{[l-2]})$, where $2\le l\le t\le \frac{5l-2}{4}$;
\item[(3b)]$\th(4l-2-2t, 2t-2, (4l-4)^{[l-1]}, (4l-2)^{[l-1]})$, where $2\le l\le t\le \frac{5l}{4}$;
\item[(4)]$\th(2t, 4s-6-2t, 2s-4, (4s-6)^{[s-3]})$, where $\frac{2s-3}{8}\le t\le \frac{6s-5}{8}$ and $s\ge 4$.
\end{itemize}

\iffalse
\begin{enumerate}[1a.]
\item[1.] $\th(4l^{[3l+2]}, (4l+2)^{[l]})$, $l\ge 1$;
\item[2a.] $\th(2l-2, (4l-2)^{[3l-1]})$, $l\ge 2$;
\item[2b.] $\th(2, 4^{[3]}, 6)$; $\th(4, 8^{[5]}, 10^{[2]})$; $\th(6, 12^{[7]}, 14^{[3]})$;
\item[3a.] $\th(4l-2-2t, 2t, (4l-4)^{[l]}, (4l-2)^{[l-2]})$, $2\le l\le t\le \frac{5l-2}{4}$;
\item[3b.] $\th(4l-2-2t, 2t-2, (4l-4)^{[l-1]}, (4l-2)^{[l-1]})$, $2\le l\le t\le \frac{5l}{4}$;
\item[4.] $\th(2t, 4s-6-2t, 2s-4, (4s-6)^{[s-3]})$, $\frac{2s-3}{8}\le t\le \frac{6s-5}{8}$ and $s\ge 4$.
\end{enumerate}
\fi
\end{theorem}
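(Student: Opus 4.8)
The plan is to establish the characterization in two directions: \emph{necessity}, that every $\th_s$ with $\chi_{la}(\th_s)=2$ appears in the list, and \emph{sufficiency}, that each graph in the list does admit a local antimagic $2$-coloring.

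For necessity I would begin with a local antimagic $2$-coloring $f$ with distinct induced colors $x$ and $y$. Since $f^+$ is a proper $2$-coloring, $\th_s$ is bipartite, so every cycle length $a_i+a_j$ is even and hence $a_1,\dots,a_s$ share a common parity. I would first eliminate the all-odd case: there the bipartition $(A,B)$ of $\th_s$ satisfies $|A|=|B|=1+\tfrac12(|E|-s)$, while summing $f^+$ over each part (every edge counted once, by bipartiteness) gives $|A|x=|B|y=\binom{|E|+1}{2}$, which forces $x=y$ --- a contradiction. So every $a_i$ is even, the hubs $u,v$ lie in a common part $A$, and a direct count gives $|A|=2-s+\tfrac12|E|$ and $|B|=\tfrac12|E|$. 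Then $|B|y=\binom{|E|+1}{2}$ forces $y=|E|+1$, the relation $|A|x=\binom{|E|+1}{2}$ determines $x$, and since $s\ge 3$ we have $|A|<|B|$, so $x>y$ and the divisibility $|A|\mid\binom{|E|+1}{2}$ becomes a genuine constraint on $|E|$.

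The core of the necessity proof is the local structure along each $(u,v)$-path $w_0w_1\cdots w_{a_i}$ with consecutive edge labels $e_1,\dots,e_{a_i}$: every internal $w_j$ has degree $2$, so $f^+(w_j)=e_j+e_{j+1}\in\{x,y\}$, and these sums alternate along the path; since $w_1$ and $w_{a_i-1}$ are adjacent to hubs of color $x$, one obtains $e_1+e_2=e_3+e_4=\cdots=y$ and $e_2+e_3=e_4+e_5=\cdots=x$, so the odd-indexed labels and the even-indexed labels on each path form arithmetic progressions with common differences $x-y$ and $y-x$, tied together by $e_1+e_2=y=|E|+1$ (a length-$2$ path simply contributes one complementary pair). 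Imposing in addition that the $s$ edges at $u$ sum to $x$, and likewise at $v$, I would then enumerate which path-length multisets $\{a_1,\dots,a_s\}$ allow $s$ such (possibly degenerate) progressions to be fitted bijectively into $[1,|E|]$ while meeting both hub-sums. This enumeration is what produces $K_{2,s}$ --- the all-length-$2$ case, in which $x=\tfrac12 s(2s+1)$ is an integer different from $y$ precisely when $s$ is even and $s\ge 4$ --- together with the parametric families (1)--(4); the restriction ``size $>2s+2$'' appears because smaller even bridge graphs either violate $|A|\mid\binom{|E|+1}{2}$ or leave no room for a legal packing.

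For sufficiency I would, for each graph in the list, exhibit an explicit bijection $f\colon E\to[1,|E|]$ assembled from exactly these interleaved arithmetic-progression blocks --- one interleaved pair of progressions per long path, together with complementary pairs summing to $|E|+1$ on the length-$2$ paths --- chosen so that the blocks tile $[1,|E|]$ and the extreme edges of the paths sum to $x$ at both hubs, and then verify by a partial-sum computation that $f^+$ takes only the two values $x$ and $y$; this is routine but must be carried out separately for $K_{2,s}$ and for each of (1)--(4). I expect the real difficulty to lie in the completeness half of necessity: showing that the arithmetic-progression and hub-sum constraints have no solutions beyond the five stated families will require a careful finite case analysis on the path-length multiset together with a bookkeeping of the residues of the edge labels modulo $x-y$.
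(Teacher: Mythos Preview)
This theorem is stated in the paper only as a quotation from \cite{LSZPN} (note the attribution in the theorem heading and the preceding sentence ``Firstly, we recall some known results\ldots''); the present paper gives no proof of it, so there is no in-paper argument against which to compare your proposal.

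That said, your outline is sound as far as it goes. The initial reductions --- bipartiteness forcing a common parity of the $a_i$; the double-count $|A|x=|B|y=\binom{|E|+1}{2}$ eliminating the all-odd case; the identifications $|B|=\tfrac12|E|$, $y=|E|+1$, and $|A|=\tfrac12|E|-s+2$; and the alternating-sum structure along each path yielding two interleaved arithmetic progressions with common difference $\pm(x-y)$ --- are all correct and are the natural opening moves. What you correctly identify as ``the real difficulty'', namely the exhaustive enumeration showing that only the listed families admit a packing of these progressions into $[1,|E|]$ with both hub sums equal to $x$, is where essentially all the content lies, and your proposal does not actually perform it; absent the argument in \cite{LSZPN} one cannot judge whether your intended organization of that case analysis matches theirs. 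On the sufficiency side, the paper does describe (in the paragraph opening Section~4.1) the explicit labeling of family~(1) from \cite{LSZPN}, and it is precisely of the interleaved arithmetic-progression type you propose, so at least there your plan is on target.
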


In the following of this section, we consider a family of $s$-bridge graphs $\th_{s}$ which are bipartite and of size $2s+2$, where $s\geq 3$.
Clearly, $\th_{s}=\th(2^{[s-1]}, 4)$.
In \cite{LSZPN}, it has been proved that $\chi_{la}(\th(2^{[s-1]}, 4))\geq 3$.
Next we shall show that $\chi_{la}(\th(2^{[s-1]}, 4))=3$.

\begin{theorem}\label{Thm1}
For any $s\geq 2$, $\chi_{la}(\th(2^{[s-1]}, 4))=3$.
\end{theorem}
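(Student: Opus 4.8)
The plan is to establish the theorem in two parts. First, for the lower bound $\chi_{la}(\th(2^{[s-1]},4))\ge 3$: when $s\ge 3$ this is cited from \cite{LSZPN}, so only the case $s=2$ needs attention. Here $\th(2,4)$ is a cycle $C_6$, and since $C_n$ is bipartite with a local antimagic labeling forcing at least $3$ colors precisely when $n\equiv 2\pmod 4$ (a known fact about cycles), we have $\chi_{la}(C_6)\ge 3$; alternatively one checks directly that a $2$-coloring would force the two color classes to have equal sums, which is impossible for $C_6$ by a short parity/counting argument. Thus the real content is the upper bound.

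For the upper bound, the approach is an explicit construction of a local antimagic labeling $f$ of $\th(2^{[s-1]},4)$ using only $3$ induced colors. Let $u,v$ be the two branch vertices. Label the $s-1$ paths of length $2$ by their two edges, say the $i$-th such path has edges $e_i$ (incident to $u$) and $e_i'$ (incident to $v$), and the path of length $4$ has edges $d_1,d_2,d_3,d_4$ in order from $u$ to $v$. The internal vertices on the length-$2$ paths each see exactly the pair $\{f(e_i),f(e_i')\}$, so I want $f(e_i)+f(e_i')$ to take the same value $k$ for every $i$ — a standard "antimagic pairing" of $\{1,\dots,2s+2\}$ into pairs of constant sum. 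Concretely, pair up labels as $\{j, 2s+3-j\}$, each summing to $2s+3$; assign $s-1$ of these pairs to the length-$2$ paths and the remaining pairs (there are two leftover pairs, involving the four middle values) to the length-$4$ path, arranging $d_1,d_2,d_3,d_4$ so that the two degree-$2$ internal vertices of that path also receive color $2s+3$ (the middle vertex sees $f(d_2)+f(d_3)$, the others see $f(d_1)+f(d_2)$ and $f(d_3)+f(d_4)$ — choose the four middle values, e.g. $\{s,s+1,s+2,s+3\}$, and order them as $s+1,s+2,s,s+3$ or similar so that consecutive sums are all $2s+3$). Then every internal vertex has color $2s+3$. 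It remains to make $f^+(u)$ and $f^+(v)$ two further values, both distinct from $2s+3$ and from each other is not required (they are nonadjacent), but each must differ from $2s+3$; since $f^+(u)+f^+(v)$ equals the total edge-sum minus the contributions double-counted appropriately, a small adjustment (swapping which label in a pair goes to $u$ versus $v$) lets me tune $f^+(u)$ and $f^+(v)$ away from $2s+3$ while keeping all internal-vertex colors fixed. This yields exactly $3$ colors: $f^+(u)$, $f^+(v)$ (possibly equal, giving even fewer — but then one adjusts to keep it at $3$, or notes $2$ is excluded by the lower bound so they cannot collapse with $2s+3$), and $2s+3$.

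The main obstacle I anticipate is the bookkeeping for small $s$ (say $s=2,3,4$), where the pool $\{1,\dots,2s+2\}$ is small and the "two leftover middle pairs" for the length-$4$ path may clash with the constant-sum requirement or force $f^+(u)=f^+(v)=2s+3$; these low cases likely need to be handled by ad hoc labelings exhibited in a short table. A secondary technical point is verifying that $f^+(u)\ne 2s+3$ and $f^+(v)\ne 2s+3$: since $u$ has degree $s$ and $v$ has degree $s$, one must check the chosen assignment of the $s$ edges at $u$ (namely the $s-1$ labels $f(e_i)$ plus $f(d_1)$) sums to something other than $2s+3$, which for $s\ge 3$ is automatic since $f^+(u)$ is a sum of $s\ge 3$ distinct positive integers and can be made large, but deserves an explicit inequality. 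Once the pairing is pinned down, verifying that $f$ is a genuine bijection and that adjacent vertices (each internal vertex is adjacent only to vertices of color $f^+(u)$ or $f^+(v)$, and $u\not\sim v$ since $a_1=2>1$) get distinct colors is routine.
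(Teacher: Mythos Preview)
There is a genuine gap in your construction: you cannot make all three internal vertices of the length-$4$ path receive the same color $2s+3$. If $d_1+d_2=d_2+d_3=d_3+d_4=2s+3$, then $d_1=d_3$ and $d_2=d_4$, contradicting injectivity of $f$. Your own example illustrates this: with $(d_1,d_2,d_3,d_4)=(s+1,s+2,s,s+3)$ the consecutive sums are $2s+3,\,2s+2,\,2s+3$, so the middle vertex $z_2$ is forced to receive a color $c\ne 2s+3$. Consequently your color count is not $\{f^+(u),f^+(v),2s+3\}$ but rather $\{f^+(u),f^+(v),2s+3,c\}$, and the swap-within-pairs trick you invoke only moves weight between $u$ and $v$; it does nothing to absorb the extra color at $z_2$.

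The paper's proof confronts exactly this obstruction: it accepts that $z_2$ gets its own color and instead arranges $f^+(u)=f^+(v)$, so the three colors are $2s+3$, $f^+(z_2)$, and $f^+(u)=f^+(v)$. Achieving $f^+(u)=f^+(v)$ is the real work and is not a matter of a casual swap: with the pairs $\{j,2s+3-j\}$ the difference $f^+(u)-f^+(v)$ is a signed sum of the odd numbers $2s+1,2s-1,\dots$ plus a contribution from $d_1-d_4$, and making this vanish forces a parity analysis depending on $s\bmod 4$. The paper carries this out via recursively defined $2\times(s+1)$ (for $s$ odd) or $2\times s$ (for $s$ even) matrices with prescribed row-sum differences, splitting into four subcases. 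Your sketch does not address this balancing problem, and for large $s$ the alternative of making $f^+(z_2)$ coincide with $f^+(u)$ or $f^+(v)$ fails for size reasons (a sum of $s$ distinct positive labels eventually exceeds any two-term sum).
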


Before proving Theorem \ref{Thm1}, we introduce some notation which will be used later.
Let $A$ be a matrix. We use $r_i(A)$ and $c_j(A)$ to denote the $i$-th row sum
and the $j$-th column sum of $A$ respectively.
And $J_{m, n}$ denotes the $m\times n$ matrix whose entries are $1$.

\begin{proof}
As $\chi_{la}(\th(2^{[s-1]}, 4))\ge 3$, it is sufficient to show that
$\chi_{la}(\th(2^{[s-1]}, 4))\le 3$ holds for any $s\geq 2$.
For any $1\leq j\leq s-1$, let $P_j=uw_jv$ be the $j$-th $(u, v)$-path in $\th_s$
and $P_s=uz_1z_2z_3v$ be the $s$-th $(u, v)$-path in $\th_s$.

\textbf{Case 1.} $s$ is odd and $s\geq 3$.

In this case, we have the following two cases.

\textbf{Case 1.1.} $s+1=4r\geq 4$.

We shall define a $2\times 4r$ matrix recurrently such that the set of its entries is $[1, 8r]$ for $r\ge 1$.

Let
$\Psi_4=\begin{pmatrix}
 1 & 7 & 6 & 3\\8 & 2 & 4 & 5\end{pmatrix}$.
It is clear that
(i) the set of entries of $\Psi_4$ is $[1,8]$,
(ii) $c_1(\Psi_4)=c_2(\Psi_4)$,
and (iii) $r_2(\Psi_4)-r_1(\Psi_4)=2$.
Suppose that $\Psi_{4r}$ is a $2\times 4r$ matrix satisfying
(i) the set of entries is $[1,8r]$,
(ii) for any $1\leq j_1, j_2\leq 4r-2$, $c_{j_1}(\Psi_{4r})=c_{j_2}(\Psi_{4r})$,
and (iii) $r_2(\Psi_{4r})-r_1(\Psi_{4r})=2$, where $r\ge 1$.

Let
 \[\Psi_{4r+4}=\left(\begin{array}{cccc|c}
 1  & 8r+7 & 8r+6 & 4& \multirow{2}{1.9cm}{$\Psi_{4r}+4J_{2,4r}$}\\
8r+8 & 2 & 3 & 8r+5 &
 \end{array}\right).\]

Consequently, we have a $2\times (s+1)$ matrix $\Psi_{s+1}=(\psi_{i,j})_{2\times (s+1)}$
satisfying the following conditions:
(i) the set of entries is $[1,2s+2]$,
(ii) for any $1\leq j_1, j_2\leq s-1$, $c_{j_1}(\Psi_{s+1})=c_{j_2}(\Psi_{s+1})$,
and (iii) $r_2(\Psi_{s+1})-r_1(\Psi_{s+1})=2$.

It is easily to get that $c_j(\Psi_{s+1})=2s+3$ for any $1\le j\le s-1$,
$r_1(\Psi_{s+1})=R-1$ and $r_2(\Psi_{s+1})=R+1$, where $R=\frac{1}{2}(s+1)(2s+3)$.
Also the last two columns of $\Psi_{s+1}$ form a matrix $\begin{pmatrix}s+3 & s\\s+1 & s+2\end{pmatrix}$.

Now we are ready to label the edges of $\th_s$.
Let $f$ be an edge labeling of $\th_s$ such that
for any $1\leq j\leq s-1$
the edges $uw_j$ and $w_jv$ are assigned the labels $\psi_{1, j}$ and $\psi_{2, j}$,
and the edges $uz_1$, $z_1z_2$, $z_2z_3$ and $z_3v$
are assigned the labels $\psi_{1,s}=s+3$, $\psi_{1,s+1}=s$, $\psi_{2, s+1}=s+2$ and $\psi_{2, s}=s+1$, respectively.
We have that
$f^+(w_j)=2s+3$ for each $1\le j \le s-1$, $f^+(z_1)=f^+(z_3)=2s+3$, $f^+(z_2)=2s+2$ and $f^+(u)=f^+(v)=R-(s+1)\ge 3s+5$.
Thus $f$ is a local antimagic $3$-coloring of $\th(2^{[s-1]}, 4)$,
which implies that $\chi_{la}(\th(2^{[s-1]}, 4))\le 3$.

\textbf{Case 1.2.} $s+1=4r+2\geq 6$.

Similar to Case~(1.1), we shall define a $2\times (4r+2)$ matrix recurrently such that the set of its entries is $[1, 8r+4]$ for $r\ge 1$.

Let $\Psi_6=\left(\begin{array}{cccc|cc}
 1 & 11 & 9 & 6 & 8 & 5\\12 & 2 & 4 & 7 & 10 & 3\end{array}\right)$.
It is obvious that for $1\leq j_1, j_2\leq 4$, $c_{j_1}(\Psi_6)=c_{j_2}(\Psi_6)=13$, and $r_1(\Psi_6)-r_2(\Psi_6)=2$.
Suppose $\Psi_{4r+2}$ is a $2\times (4r+2)$ matrix satisfying
(i) the set of entries is $[1, 8r+4]$,
(ii) for any $1\leq j_1, j_2\leq 4r$, $c_{j_1}(\Psi_{4r+2})=c_{j_2}(\Psi_{4r+2})$,
and (iii) $r_2(\Psi_{4r+2})-r_1(\Psi_{4r+2})=2$, where $r\geq 1$.

Let
 \[\Psi_{4r+6}=\left(\begin{array}{cccc|c}
 1  & 8r+11 & 8r+10 & 4& \multirow{2}{2.8cm}{$\Psi_{4r+2}+4J_{2,4r+2}$}\\
8r+12 & 2 & 3 & 8r+9 &
 \end{array}\right).\]

Clearly, $\Psi_{s+1}=(\psi_{i,j})_{2\times (s+1)}$ is a $2\times (s+1)$ matrix in which $c_j(\Psi_{s+1})=2s+3$ for each $1\le j\le s-1$ and $r_1(\Psi_{s+1})-r_2(\Psi_{s+1})=2$,
where $r_1(\Psi_{s+1})=\frac{1}{2}(2s^2+5s+5)$ and $r_2(\Psi_{s+1})=\frac{1}{2}(2s^2+5s+1)$.
Note that the last two columns of $\Psi_{s+1}$ is a matrix $\begin{pmatrix}s+3 & s\\
s+5 & s-2\end{pmatrix}$.

Let $f$ be an edge labeling of $\th_s$ such that
the edges $uw_j$ and $w_jv$ are labelled by$\psi_{1,j}$ and $\psi_{2,j}$ for each $1\le j \le s-1$,
and the edges $uz_1$, $z_1z_2$, $z_2z_3$ and $z_3v$ are labelled by $\psi_{1,s}=s+3$, $\psi_{1,s+1}=s$, $\psi_{2, s+1}=s-2$ and $\psi_{2, s}=s+5$, respectively.
Then $f^+(w_j)=2s+3$ for each $1\le j \le s-1$, $f^+(z_1)=f^+(z_3)=2s+3$, $f^+(z_2)=2s-2$, $f^+(u)=r_1(\Psi_{s+1})-s=\frac{1}{2}(2s^2+3s+5)$ and $f^+(v)=r_2(\Psi_{s+1})-(s-2)=\frac{1}{2}(2s^2+3s+5)\ge 5s+10$.
Therefore, $f$ is a local antimagic $3$-coloring of $\th(2^{[s-1]}, 4)$, implying $\chi_{la}(\th(2^{[s-1]}, 4))\le 3$.

\textbf{Case 2.} $s$ is even and $s\geq 4$.
There are also two subcases in this case.

\textbf{Case 2.1.} $s=4r\geq 4$.

Let $\Lambda_4=\left(\begin{array}{c|c}A_4 & D\end{array}\right)$, where $A_4=\begin{pmatrix}3 & 7 & 6 \\ 8 & 4 & 5\end{pmatrix}$ and  $D=\begin{pmatrix}2 \\ 1\end{pmatrix}$. Note that the set of entries of $\Lambda_4$ is $[1, 8]$ and $r_i(\Lambda_4)=18$ for each $i\in \{1,2\}$.
Suppose $\Lambda_{4r}=\left(\begin{array}{c|c}A_{4r} & D \end{array}\right)$ is defined so that the set of entries of $\Lambda_{4r}$ is $[1, 8r]$ and $r_i(\Lambda_{4r})=2r(8r+1)$ for each $i\in \{1,2\}$, where $r\ge 1$.
Let $\Lambda_{4(r+1)}=\left(\begin{array}{c|c}A_{4(r+1)} & D\end{array}\right)$, where $A_{4(r+1)}=\begin{pmatrix}A_{4r}+4J_{2,4r-1} & B_{4r}\end{pmatrix}$ and\break $B_{4r}=\begin{pmatrix} 3 & 8r+7 & 8r+6 & 6\\8r+8 & 4 & 5 & 8r+5\end{pmatrix}$.

It is clear that the set of entries of $\Lambda_{4(r+1)}$ is $[1, 8(r+1)]$ and
for each $i\in \{1,2\}$,
\begin{align*}
r_i(\Lambda_{4(r+1)}) & =r_i(A_{4(r+1)})+r_i(D)=r_i(A_{4r})+4(4r-1)+r_i(B_{4r})+r_i(D)\\
& =r_i(\Lambda_{4r})+16r-4+(16r+22)=2r(8r+1)+32r+18\\
& =2(r+1)(8r+9).
\end{align*}
Thus $\Lambda_{4n}$ is a $2\times 4n$ matrix with entry set $[1, 8n]$,
where $n\geq 1$, the sum of the elements in each row is $2n(8n+1)$,
the sum of the elements in each column except the last column is $8n+3$,
and the last column is $\begin{pmatrix}2 &1\end{pmatrix}^T$.

\textbf{Case 2.2.} $s=4r+2\geq 6$.

Let $\Lambda_6=\left(\begin{array}{c|c}A_6 & D\end{array}\right)$, where
$A_6=\left(\begin{array}{*{5}{c}}
12 & 4  & 5 & 9 & 7\\
3 & 11 & 10 & 6 & 8
\end{array}\right)$.  Note that the set of entries of $\Lambda_6$ is $[1, 12]$ and $r_i(\Lambda_6)=39$ for each $i\in \{1,2\}$.
Suppose $\Lambda_{4r+2}=\left(\begin{array}{c|c}A_{4r+2} & D \end{array}\right)$ is defined so that the set of entries of $\Lambda_{4r+2}$ is $[1, 8r+4]$ and $r_i(\Lambda_{4r+2})=2r(8r+9)+5$ for each $i\in \{1,2\}$, where $r\ge 1$.
Let $\Lambda_{4(r+1)+2}=\left(\begin{array}{c|c}A_{4(r+1)+2} & D\end{array}\right)$, where $A_{4(r+1)+2}=\begin{pmatrix}A_{4r+2}+4J_{2,4r+1} & B_{4r+2}\end{pmatrix}$ and $B_{4r+2}=\begin{pmatrix} 3 & 8r+11 & 8r+10 & 6\\8r+12 & 4 & 5 & 8r+9\end{pmatrix}$.

It is obvious that the set of entries of $\Lambda_{4(r+1)+2}$ is $[1, 8r+12]$,
and the sum of the elements in each row is
\begin{align*}
r_i(\Lambda_{4(r+1)+2}) & =r_i(A_{4(r+1)+2})+r_i(D)=r_i(A_{4r+2})+4(4r+1)+r_i(B_{4r+2})+r_i(D)\\
& =r_i(\Lambda_{4r+2})+16r+4+(16r+30)=[2r(8r+9)+5]+32r+34\\ &=2(r+1)(8r+17)+5.
\end{align*}

Therefore, $\Lambda_{4n+2}$ is a $2\times 4n+2$ matrix with entry set $[1, 8n+4]$,
where $n\geq 1$, the sum of the elements in each row is $2n(8n+9)+5$,
the sum of the elements in each column except the last column is $8n+7$,
and the last column is $\begin{pmatrix}2 &1\end{pmatrix}^T$.

Combining Case 2.1 and Case 2.2, when $s$ is even and $s\geq 4$,
we always obtain a $2\times s$ matrix $\Lambda_{s}=(\lambda_{i,j})_{2\times s}$,
in which the set of entries is $[1, 2s]$,
$r_i(\Lambda_{s})=\frac{s(2s+1)}{2}$ for each $i\in \{1,2\}$ and
$c_j(\Lambda_{s})=2s+3$ for each $1\le j\le s-1$.

For any even integer $s\geq 4$,
let $f$ be an edge labelling of $\th(2^{[s-1]}, 4)$ such that
$f(uw_j)=\lambda_{1,j}$ and $f(w_jv)=\lambda_{2,j}$ for each $1\le j\le s-1$,
$f(uz_1)=\lambda_{1,s}=2$, $f(z_1z_2)=2s+1$, $f(z_2z_3)=2s+2$ and $f(z_3v)=\lambda_{2,s}=1$.
Then we have that
$f^+(z_1)=f^+(z_3)=2s+3$,  $f^+(z_2)=4s+3$, $f^+(u)=f^+(v)=\frac{s(2s+1)}{2}=s^2+\frac{s}{2}\ge 6s+3$,
and $f^+(w_j)=2s+3$ for each $1\le j \le s-1$.
Therefore, $f$ is a local antimagic $3$-coloring,
implying that $\chi_{la}(\th(2^{[s-1]}, 4))\le 3$.

From Case 1 and Case 2, we have that $\chi_{la}(\th(2^{[s-1]}, 4))=3$ for any $s\geq 2$.
\end{proof}

\begin{rem} In Case 1.1, we get a local antimagic $3$-coloring $f$ of  $\th(2^{[s-1]}, 4)$ such that $f^+(u)=f^+(v)$.
If we require $f^+(u)\ne f^+(v)$, then there is only one case, which is $s=5$.
Following is the assignment of each $(u,v)$-path in $\th(2^{[4]}, 4)$:
$1,12; 2,11; 3,10; 4,9; 5,8,7,6$,
which induces the vertex labels $13, 15$ and $48$.
\end{rem}

\section{Bridge graphs with even number of internal paths of the same length }\label{sec-evenlength}

In this section, we shall present that there is a family of bridge graphs with even number
of internal paths of the same length whose local antimatic chromatic number is $3$.

\begin{theorem}\label{thm-even+oddlength}
For positive integers $m_1, \dots, m_l$,
$\chi_{la}(\th(m_1^{[2]},\dots, m_l^{[2]}))\le 3$,
where $l\geq 1$.
\end{theorem}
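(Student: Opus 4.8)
The plan is to mimic the matrix technique used in the proof of Theorem~\ref{Thm1}, but now the skeleton of the construction is even simpler because every internal path in $\th(m_1^{[2]},\dots,m_l^{[2]})$ comes in a pair of the same length. Write $G=\th(m_1^{[2]},\dots,m_l^{[2]})$ with vertices $u,v$ and, for each $1\le k\le l$, two $(u,v)$-paths $P_k=u\,x^k_1\,x^k_2\cdots x^k_{m_k-1}\,v$ and $Q_k=u\,y^k_1\,y^k_2\cdots y^k_{m_k-1}\,v$ of common length $m_k$. Let $M=|E(G)|=2\sum_{k=1}^{l}m_k$. The target is a local antimagic labeling with exactly the three colors realised by $u$, by $v$, and by all the internal vertices, so the two guiding requirements are: (a) every internal vertex receives the same color $c$, which for a degree-$2$ vertex $x^k_i$ means the two edges meeting at $x^k_i$ sum to $c$; and (b) $f^+(u)$ and $f^+(v)$ are each different from $c$ (they may coincide with each other or not, whichever is easier to arrange).

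First I would handle a single pair $P_k,Q_k$ in isolation as the basic building block: the $2m_k$ edges of $P_k\cup Q_k$ must be given $2m_k$ consecutive integers (a block of the global labeling) so that consecutive edges along each of the two paths sum to the same constant. This is exactly the ``consecutive-sum path'' phenomenon: on a path with edges labelled $e_1,e_2,\dots,e_{m_k}$, asking $e_i+e_{i+1}=c$ for all $i$ forces an alternating pattern $a,c-a,a,c-a,\dots$, so in fact each path only uses two values. That is too rigid, so instead I would not force \emph{all} consecutive edges along one path to have equal sums; rather I want each \emph{internal vertex} sum to equal the \emph{global} constant $c$, and I get to choose the pairing of labels into vertex-pairs freely. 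Concretely, for the pair of paths of length $m_k$ I would split the $2(m_k-1)$ ``internal'' edge-slots into $m_k-1$ complementary pairs summing to $c$ (using labels symmetric about $c/2$), and reserve the $2$ remaining labels of the block for the two ``end'' edges at $v$ (or at $u$), steering those into $f^+(v)$; the two end edges at $u$ are the first edges of $P_k$ and $Q_k$. The arithmetic here is the routine part: choose the global constant $c=M+?$ and a consistent partition of $[1,M]$ into the appropriate complementary pairs plus leftover terminal labels, exactly as $\Psi$ and $\Lambda$ were built recursively in Theorem~\ref{Thm1}; one can again proceed by induction on $l$, peeling off one length-$m_k$ pair at a time and shifting the already-constructed labeling by a constant $J$-matrix.

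The main obstacle I expect is bookkeeping at the two hubs $u$ and $v$: we have $2l$ edges at $u$ and $2l$ edges at $v$, their labels are determined once the internal pairing is fixed, and we must verify $f^+(u)\neq c$ and $f^+(v)\neq c$ (and handle the small exceptional cases, e.g. $l=1$ with very short $m_1$, or all $m_k=1$, where $G$ may be a multigraph-free graph like $K_{2,2}=C_4$ and a direct ad hoc labeling is cleaner). Since each internal vertex contributes $c$ and there are $\sum_k(m_k-1)$ of them, a global double-counting identity relates $f^+(u)+f^+(v)$, $c$, and $\binom{M+1}{2}$, which pins down $f^+(u)+f^+(v)$; then I only need one further degree of freedom to keep each of $f^+(u),f^+(v)$ away from $c$, and for $M$ not too small there is plenty of room, so the inequality $f^+(u),f^+(v)>c$ will hold automatically once the totals are computed. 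I would organise the write-up as: (i) the base case(s) and the recursive matrix, (ii) extraction of the edge labeling from the matrix, (iii) verification of the three induced colors, (iv) a short separate treatment of the degenerate short cases.
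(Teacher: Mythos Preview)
Your plan has a genuine gap at the very point where you yourself sense trouble. You aim for a single color $c$ on all internal vertices, then correctly observe that on a path with edges $e_1,\dots,e_{m_k}$ the conditions $e_i+e_{i+1}=c$ for all $i$ force $e_1=e_3=e_5=\cdots$ and $e_2=e_4=\cdots$, which is impossible in a bijective labeling once $m_k\ge 3$. Your proposed escape --- ``I get to choose the pairing of labels into vertex-pairs freely'' --- does not exist: the two edges meeting at $x_i^k$ are precisely the $i$-th and $(i{+}1)$-st edges of $P_k$, so the pairing is dictated by the path, not by you. Splitting a block of labels into complementary pairs summing to $c$ does nothing unless those pairs land on \emph{consecutive} edges, and that is exactly the rigid alternating pattern you already ruled out. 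So the scheme as stated cannot produce a labeling for any pair with $m_k\ge 3$, and the inductive matrix machinery of Theorem~\ref{Thm1} has nothing to recurse on.

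The fix --- and this is what the paper does --- is to abandon the single internal color and allow the internal vertices \emph{two} colors, alternating along each path. Concretely, one labels each path by an interleaving of two arithmetic progressions with opposite common differences $\pm 2$, so that consecutive edge sums alternate between $q$ and $q+2$ (where $q=|E(G)|$); this uses up all of $[1,q]$ and makes adjacent internal vertices differently colored. The pairing of paths of equal length is then exploited not to equalise internal sums but to force the \emph{terminal} contributions at $u$ and at $v$ to match: for each pair the two first labels sum to $q+1$ and the two last labels sum to $q+1$, so $f^+(u)=f^+(v)=l(q+1)$, a third color distinct from $q$ and $q+2$. If you want to salvage your write-up, replace target~(a) by ``consecutive edge sums along every path alternate between two fixed values'', and redirect the role of the pairing $m_k^{[2]}$ toward controlling the hub sums rather than the internal sums.
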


Note that $m_1, \dots, m_l$ in Theorem \ref{thm-even+oddlength} do not necessarily have to be distinct or in non-decreasing order.
Before giving the proof of Theorem \ref{thm-even+oddlength},
we firstly introduce some notion and results which will be used later.

For positive integer $s$ and integers $i, d$,
let $A_s(i;d)$ be the arithmetic progression of length $s$ with a first term $i$ and a common difference $d$.
Suppose that $A_1$ and $A_2$ are two ordered sequences of length $n$.
Let $A_1\diamond A_2$ be the ordered sequence of length $2n$ generated by
$A_1$ and $A_2$ such that the $(2i-1)$-st term is the $i$-th term of $A_1$ and the $(2i)$-th term is the $i$-th term of $A_2$, where $1\le i\le n$.
When the length of $A_1$ is $n+1$, we may similarly define $A_1\diamond A_2$ as an ordered sequence of length $2n+1$.

%In \cite{LSZPN}, the authors obtained some $s$-bridge graphs $\th_s$ with $\chi_{la}(\th_s)=2$
%by using some suitable sequences $A_1\diamond A_2$ to label the $(u, v)$-paths of $\th_s$.

When The following result can be obtained easily.

\begin{lemma}\label{lem-diamond}
 For $m,q\in\N$ and $a,b\in \Z$, let $I_{m,q}(a)=A_m(a;2)\diamond A_m(q-a; -2)$ and  $D_{m,q}(b)=A_m(b;-2)\diamond A_m(q-b+2;2)$. Then,

\begin{itemize}
\item[(1)] the first and the last terms of $I_{m,q}(a)$ are $a$ and $q-a-2m+2$;
\item[(2)] the sums of two consecutive terms of $I_{m,q}(a)$ are $q$ and $q+2$, alternatively;
\item[(3)] the first and the last terms of $D_{m,q}(b)$ are $b$ and $q-b+2m$;
\item[(4)] the sums of two consecutive terms of $D_{m,q}(b)$ are $q+2$ and $q$, alternatively.
\end{itemize}
\end{lemma}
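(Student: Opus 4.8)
## Proof proposal for Lemma~\ref{lem-diamond}

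The plan is to unwind the definitions of the arithmetic progressions and the $\diamond$-operation, and then read off the four claims by direct inspection. I will first fix notation: write $I := I_{m,q}(a) = A_m(a;2) \diamond A_m(q-a;-2)$, so the two constituent sequences are $P = (a, a+2, a+4, \dots, a+2(m-1))$ and $Q = (q-a, q-a-2, \dots, q-a-2(m-1))$, and by definition the $(2i-1)$-st term of $I$ is $P_i = a + 2(i-1)$ while the $(2i)$-th term of $I$ is $Q_i = q - a - 2(i-1)$, for $1 \le i \le m$.

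For part (1): the first term of $I$ is $P_1 = a$, and the last (the $(2m)$-th) term is $Q_m = q - a - 2(m-1) = q - a - 2m + 2$, as claimed. For part (2): there are two kinds of consecutive pairs in $I$. A pair of the form (term $2i-1$, term $2i$) sums to $P_i + Q_i = \bigl(a + 2(i-1)\bigr) + \bigl(q - a - 2(i-1)\bigr) = q$; a pair of the form (term $2i$, term $2i+1$) sums to $Q_i + P_{i+1} = \bigl(q - a - 2(i-1)\bigr) + \bigl(a + 2i\bigr) = q + 2$. Since the pairs alternate between these two types as we scan along $I$, the consecutive sums are $q$ and $q+2$ alternately, starting with $q$. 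Parts (3) and (4) for $D_{m,q}(b) = A_m(b;-2) \diamond A_m(q-b+2;2)$ are entirely analogous: here the odd-indexed constituent is $(b, b-2, \dots, b-2(m-1))$ and the even-indexed constituent is $(q-b+2, q-b+4, \dots, q-b+2m)$, so the first term is $b$, the last term is $q - b + 2m$, the pair (term $2i-1$, term $2i$) sums to $b - 2(i-1) + q - b + 2i = q + 2$, and the pair (term $2i$, term $2i+1$) sums to $q - b + 2i + b - 2i = q$, giving the sums $q+2, q, q+2, q, \dots$ as stated.

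There is essentially no obstacle here — the lemma is a bookkeeping statement whose only subtlety is keeping the indices of the interleaving straight and checking that the first consecutive sum is $q$ for $I$ but $q+2$ for $D$ (this sign/offset difference is exactly what makes the two gadgets complementary, which is presumably why both are needed in the proof of Theorem~\ref{thm-even+oddlength}). I would therefore present the argument compactly, spelling out the computation for $I_{m,q}(a)$ in full and then remarking that $D_{m,q}(b)$ is handled the same way with the roles of the increasing and decreasing progressions swapped.
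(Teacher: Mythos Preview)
Your proof is correct and complete; the computations for both $I_{m,q}(a)$ and $D_{m,q}(b)$ check out exactly as you wrote them. The paper itself does not supply a proof for this lemma --- it simply remarks that the result ``can be obtained easily'' --- so your direct unwinding of the definitions of $A_m(i;d)$ and the $\diamond$-interleaving is precisely the intended (and essentially only) argument.
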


From Lemma \ref{lem-diamond}, we obtain the following result immediately.

\begin{corollary}\label{cor-diamond}
The sum of the first term of $I_{m,q}(a)$ and that of $D_{m,q}(q+1-a)$ is $q+1$,
and the sum of the last term of $I_{m,q}(a)$ and that of $D_{m,q}(q+1-a)$ is also $q+1$.
\end{corollary}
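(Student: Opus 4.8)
The statement to prove is Corollary~\ref{cor-diamond}, which is an immediate consequence of Lemma~\ref{lem-diamond}.

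\textbf{Plan.} The corollary asserts two numerical identities relating the first and last terms of the interleaved sequences $I_{m,q}(a)$ and $D_{m,q}(q+1-a)$. Since Lemma~\ref{lem-diamond} already gives closed-form expressions for exactly these four terms, the proof is simply a matter of substituting $b = q+1-a$ into parts (3) and (4) of the lemma and adding.

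\textbf{Step 1 (first terms).} By Lemma~\ref{lem-diamond}(1), the first term of $I_{m,q}(a)$ is $a$. By Lemma~\ref{lem-diamond}(3) with $b = q+1-a$, the first term of $D_{m,q}(q+1-a)$ is $q+1-a$. Adding these gives $a + (q+1-a) = q+1$, which is the first claimed identity.

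\textbf{Step 2 (last terms).} By Lemma~\ref{lem-diamond}(1), the last term of $I_{m,q}(a)$ is $q-a-2m+2$. By Lemma~\ref{lem-diamond}(3) with $b = q+1-a$, the last term of $D_{m,q}(q+1-a)$ is $q-(q+1-a)+2m = a-1+2m$. Adding these gives $(q-a-2m+2) + (a-1+2m) = q+1$, which is the second claimed identity. This completes the proof.

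\textbf{Main obstacle.} There is essentially no obstacle here: the only thing to be careful about is substituting $b=q+1-a$ correctly into the expression $q-b+2m$ for the last term of $D_{m,q}(b)$, making sure the sign of $b$ is handled properly so that the $\pm 2m$ and $\pm a$ terms cancel as intended. The reason the corollary is singled out is presumably that this particular pairing of an increasing interleaved sequence with a decreasing one, with matching endpoint sums, is exactly what will later let one glue together edge-labelings along the two length-$m_i$ paths of a $\th(m_1^{[2]},\dots,m_l^{[2]})$ bridge graph while controlling the vertex color sums, so it is worth stating separately even though its proof is one line.
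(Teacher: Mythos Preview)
Your proof is correct and matches the paper's approach exactly: the paper states the corollary as an immediate consequence of Lemma~\ref{lem-diamond} without writing out any details, and your substitution $b=q+1-a$ into parts (1) and (3) of that lemma is precisely the intended one-line verification. (Note a small slip in your plan, where you write ``parts (3) and (4)'' but then correctly invoke parts (1) and (3) in the actual steps.)
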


\nt Similar to Lemma~\ref{lem-diamond} and Corollary~\ref{cor-diamond} we have
\begin{lemma}\label{lem-diamond-1}
For $m,q\in\N$ and $a,b\in \Z$, let $I^*_{m,q}(a)=A_{m+1}(a;2)\diamond A_m(q-a; -2)$ and  $D^*_{m,q}=A_{m+1}(b;-2)\diamond A_m(q-b+2;2)$. Then,

\begin{itemize}
\item[(1)] the first and the last terms of $I^*_{m,q}(a)$ are $a$ and $a+2m$;
\item[(2)] the sums of two consecutive terms of $I^*_{m,q}(a)$ are $q$ and $q+2$, alternatively;
\item[(3)] the first and the last terms of $D^*_{m,q}(b)$ are $b$ and $b-2m$;
\item[(4)] the sums of two consecutive terms of $D^*_{m,q}(b)$ are $q+2$ and $q$, alternatively;
\item[(5)] the sum of the first term of $I^*_{m,q}(a)$ and that of $D^*_{m,q}(b)$ is $a+b$,
and the sum of the last term of $I^*_{m,q}(a)$ and that of $D^*_{m,q}(b)$ is also $a+b$.
\end{itemize}
\end{lemma}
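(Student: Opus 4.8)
The plan is to prove Lemma~\ref{lem-diamond-1} by simply unwinding the definitions and computing each quantity term by term; the statement is purely arithmetic, so there is no clever idea involved, only careful bookkeeping of indices and parities, exactly parallel to the proof of Lemma~\ref{lem-diamond} and Corollary~\ref{cor-diamond}.

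First I would record the explicit entries of the two arithmetic progressions that make up $I^*_{m,q}(a)$: the sequence $A_{m+1}(a;2)$ has $j$-th term $a+2(j-1)$ for $1\le j\le m+1$, and $A_m(q-a;-2)$ has $j$-th term $q-a-2(j-1)$ for $1\le j\le m$. Since here the first ordered sequence has length $m+1$ and the second has length $m$, the interleaving $\diamond$ produces a sequence of length $2m+1$ whose term in position $2j-1$ is $a+2(j-1)$ (for $1\le j\le m+1$) and whose term in position $2j$ is $q-a-2(j-1)$ (for $1\le j\le m$). Part~(1) is then immediate: position $1$ gives $a$, and position $2m+1=2(m+1)-1$ gives $a+2m$. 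For part~(2) I would compute the two kinds of consecutive sums: pairing positions $2j-1$ and $2j$ gives $(a+2(j-1))+(q-a-2(j-1))=q$, while pairing positions $2j$ and $2j+1$ gives $(q-a-2(j-1))+(a+2j)=q+2$; since the first pair (positions $1,2$) is of the first type and the two types alternate, the consecutive sums read $q,q+2,q,q+2,\dots$ as claimed.

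Parts~(3) and~(4) follow by the identical scheme applied to $D^*_{m,q}(b)$, using that $A_{m+1}(b;-2)$ has $j$-th term $b-2(j-1)$ and $A_m(q-b+2;2)$ has $j$-th term $q-b+2+2(j-1)$: the first term is $b$, the last term (position $2m+1$) is $b-2m$, the sum over positions $2j-1,2j$ is $q+2$, and the sum over positions $2j,2j+1$ is $q$, so the consecutive sums read $q+2,q,q+2,q,\dots$. Finally, part~(5) is obtained simply by adding the outputs of (1) and (3): the first terms sum to $a+b$, and the last terms sum to $(a+2m)+(b-2m)=a+b$. The only point requiring care — the sole "obstacle" — is the off-by-one in the $\diamond$ convention when the first sequence is one longer than the second, namely that the longer progression supplies both endpoints of the interleaved sequence of length $2m+1$; once that convention is pinned down, every assertion reduces to a direct substitution.
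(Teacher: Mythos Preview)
Your proof is correct and is exactly the kind of direct verification the paper has in mind; the paper does not write out a separate proof of Lemma~\ref{lem-diamond-1} but simply introduces it with ``Similar to Lemma~\ref{lem-diamond} and Corollary~\ref{cor-diamond} we have,'' so your explicit index-by-index computation is precisely the intended argument filled in.
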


Suppose that $q$ is fixed. If there is no ambiguous, we will write $I_m(a)$,  $D_m(b)$, $I^*_m(a)$ and  $D^*_m(b)$ instead of $I_{m,q}(a)$, $D_{m,q}(b)$, $I^*_{m,q}(a)$ and $D^*_{m,q}(b)$, respectively.

Now we are ready to prove Theorem \ref{thm-even+oddlength}.

\begin{proof}[Proof of Theorem \ref{thm-even+oddlength}]
Let $\th=\th(m_1^{[2]},\dots, m_l^{[2]})$. Without loss of generality, we may assume that $m_1, \dots, m_h$ are even and $m_{h+1}, \dots, m_l$ are odd, $0\le h\le l$.
Note that if $h=0$ then there is no even length $(u,v)$-path and if $h=l$ then there is no odd length $(u,v)$-path.
For any $1\leq i\leq 2h$, let $R_i$ be a $(u,v)$-path of length $m_i=2r_i$ in $\th$,
and for any $2h+1\le i\le 2l$, let $Q_i$ be a $(u,v)$-path of length $m_i=2r_i+1$ in $\th$.
Then the size of $\th$ is $q=2\sum\limits_{i=1}^{2l} r_i +2(l-h)$.
Without loss of generality, we assume $r_1\le \cdots \le r_{2h}$ and $r_{2h+1}\le \cdots \le r_{2l}$. Thus, by the assumption, $r_{2j-1}=r_{2j}$ for $1\le j\le l$.

\nt Now, we label the edges of the $(u,v)$-paths of $\th$ by the following steps:
\begin{enumerate}[{Step }1:]
\item If $h=0$, then jump to Step~5.
\item Label the edges of $(u,v)$-path $R_{1}$ by $I_{r_1}(1)$ and the edges of $(u,v)$-path $R_{2}$ by $D_{r_2}(q)$.
\item Suppose that for each $j\geq 1$, $R_{2j}$ has been labeled.
Let $a_j$ be the least unused label. Actually $a_j=1+\sum\limits_{i=1}^{2j}r_i$. Label the edges of $(u,v)$-path $R_{2j+1}$ by $I_{r_{2j+1}}(a_j)$ and the edges of $(u,v)$-path $R_{2j+2}$ by $D_{r_{2j+2}}(q-a_j+1)$.
\item Repeat Step~3 until all $R_i$ are labeled.
\item If $h=l$, then stop;
other else, let $b_0$ be the least unused label. Actually, $b_0=1+\sum\limits_{i=1}^{2h} r_i$. Empty summation is treated as 0.
\item Label the edges of $(u,v)$-path $Q_{2h+1}$ by $I^*_{r_{2h+1}}(b_0)$ and the edges of $(u,v)$-path $Q_{2h+2}$ by $D^*_{r_{2h+2}}(q-b_0+1)$.
\item Suppose $Q_{2h+2j}$ has been labeled for each $j\ge 1$. Let $b_j$ be the least unused label. Actually $b_j=j+1+\sum\limits_{i=1}^{2h+2j}r_i$.
Label the edges of $(u,v)$-path $Q_{2h+2j+1}$ by $I^*_{r_{2h+2j+1}}(b_j)$ and the edges of $(u,v)$-path $Q_{2h+2j+2}$ by $D^*_{r_{2h+2j+2}}(q-b_j+1)$.
\item Repeat Step~7 until all $Q_i$ are labeled.
\end{enumerate}

Note that $r_{2j+1}=r_{2j+2}$. For $j\ge 0$, we have a set
\begin{align*}& \quad\ I_{r_{2j+1}}(a_j)\cup D_{r_{2j+2}}(q-a_j+1)\\ &= A_{r_{2j+1}}(a_j,2)\cup A_{r_{2j+1}}(q-a_j;-2)\cup A_{r_{2j+2}}(q-a_j+1;-2)\cup A_{r_{2j+2}}(a_j+1;2)\\
& = [a_j, a_j+2r_{2j+1}-1]\cup [q-a_j-2r_{2j+1}+2, q-a_j+1]\\
& =[a_j, a_j+r_{2j+1}+r_{2j+2}-1]\cup [q-a_j-r_{2j+1}-r_{2j+2}+2, q-a_j+1]\\
& =\left[1+\sum_{i=1}^{2j} r_i, \sum_{i=1}^{2j+2} r_i\right]\cup \left[q+1-\sum_{i=1}^{2j+2} r_i, q-\sum_{i=1}^{2j} r_i\right],\end{align*}
where $a_0=1$.
Similarly, for $1\le j\le l-h$, we have that
\begin{align*}& \quad\
 I^*_{r_{2h+2j-1}}(b_{j-1})\cup D^*_{r_{2h+2j}}(q-b_{j-1}+1)\\
 & =\left[j+\sum\limits_{i=1}^{2h+2j-2} r_i, j+\sum\limits_{i=1}^{2h+2j} r_i\right]\cup \left[q+1-j-\sum\limits_{i=1}^{2h+2j} r_i, q+1-j-\sum\limits_{i=1}^{2h+2j-2} r_i\right].
\end{align*}

Since $q=2\sum\limits_{i=1}^{2l} r_i +2(l-h)$, all labels in $[1,q]$ are assigned.
By Lemmas~\ref{lem-diamond}, \ref{lem-diamond-1} and Corollary~\ref{cor-diamond},
we can check that the induced vertex labels are $q$, $q+2$ and $l(q+1)$.
Thus the above labeling is a local antimagic $3$-coloring of $\th$,
implying that $\chi_{la}(\th)\leq 3$.
\end{proof}

Combining Theorem \ref{thm-chilath=2} and Theorem \ref{thm-even+oddlength}, we easily get the following result.

\begin{theorem} \label{th3.5}
For integers $2\le m_1\le \cdots \le m_l$, $l\ge 1$, if $\th(m_1^{[2]},\dots, m_l^{[2]})$ is not in the list of Theorem~\ref{thm-chilath=2}, then $\chi_{la}(\th(m_1^{[2]},\dots, m_l^{[2]}))= 3$.
\end{theorem}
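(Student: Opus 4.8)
The plan is to deduce Theorem~\ref{th3.5} as an immediate corollary of the two preceding results, so the proof is essentially a bookkeeping argument combining a lower bound and an upper bound. First I would establish the upper bound $\chi_{la}(\th(m_1^{[2]},\dots,m_l^{[2]}))\le 3$: this is exactly Theorem~\ref{thm-even+oddlength}, which applies verbatim since the hypothesis there places no restriction on the $m_i$ beyond being positive integers, and here we moreover have $m_i\ge 2$. So nothing new is needed for the $\le 3$ direction.

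Next I would argue the lower bound $\chi_{la}(\th(m_1^{[2]},\dots,m_l^{[2]}))\ge 3$. The graph $\th_s$ with $s=2l$ is a connected graph with at least $3$ vertices (since $l\ge 1$ and each $m_i\ge 2$ forces at least one internal vertex, indeed $|V|\ge 3$), so $\chi_{la}\ge 2$ always holds. It remains to rule out $\chi_{la}=2$. Here is where Theorem~\ref{thm-chilath=2} enters: that theorem gives a \emph{complete characterization} of all $s$-bridge graphs (with $s\ge 3$) having $\chi_{la}=2$, namely $K_{2,s}$ with even $s\ge 4$ together with the explicit families (1)--(4). Note $K_{2,s}=\th(2^{[s]})$, which is of the form $\th(m_1^{[2]},\dots,m_l^{[2]})$ with all $m_i=2$; the families (1)--(4) are likewise bridge graphs, and whether a given $\th(m_1^{[2]},\dots,m_l^{[2]})$ coincides with one of them is a condition on the multiset $\{m_i\}$ only. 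Thus: if $\th(m_1^{[2]},\dots,m_l^{[2]})$ is \emph{not} in the list of Theorem~\ref{thm-chilath=2}, then by that characterization $\chi_{la}\ne 2$, hence $\chi_{la}\ge 3$. Combining with the upper bound gives $\chi_{la}=3$.

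The only genuine subtlety — the place I would be most careful — concerns small values of $s$. Theorem~\ref{thm-chilath=2} is stated for $s\ge 3$, i.e. $l\ge 2$; the case $l=1$, that is $\th(m_1^{[2]})$ (a single pair of internal paths of common length $m_1$, which is just a cycle $C_{2m_1}$), must be handled separately. For a cycle $C_n$ it is classical that $\chi_{la}(C_n)=3$ for all $n\ge 3$, so the statement still holds when $l=1$, but I would either cite this or note that $C_{2m_1}$ with $m_1\ge 2$ is not in the list of Theorem~\ref{thm-chilath=2} (the list contains no $2$-bridge graph) and that $\chi_{la}(C_n)\ge 3$ is known. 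A second, more cosmetic point: one should confirm that the phrase ``not in the list of Theorem~\ref{thm-chilath=2}'' is meant to include the $K_{2,s}$ exception, so that, e.g., $\th(2^{[2l]})$ with even $2l\ge 4$ is excluded from the conclusion — which is consistent, since that graph has $\chi_{la}=2$, not $3$. With these edge cases noted, the proof is the one-line combination: Theorem~\ref{thm-even+oddlength} gives $\le 3$, Theorem~\ref{thm-chilath=2} (plus the cycle case and the trivial bound $\chi_{la}\ge 2$) gives $\ge 3$, hence equality.

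I do not anticipate any real obstacle beyond correctly matching the hypotheses; the main ``work'' is simply verifying that the exclusion clause aligns precisely with the $K_{2,s}$-and-families-(1)--(4) characterization and that the $l=1$ cycle case is covered.
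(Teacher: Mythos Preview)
Your proposal is correct and matches the paper's approach exactly: the paper simply states that the result follows by combining Theorem~\ref{thm-chilath=2} and Theorem~\ref{thm-even+oddlength}, with no further argument. Your additional care about the $l=1$ cycle case and the $K_{2,s}$ exclusion is more than the paper itself provides, but the underlying logic is identical.
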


The followings are two examples of Theorem \ref{th3.5}.

\begin{example} Consider the graph $\th(2^{[2]},4^{[2]},6^{[4]},10^{[2]})$. According to the notation used in the proof of Theorem~\ref{thm-even+oddlength}, $h=l=5$, $q=56$, $r_1=r_2=1$, $r_3=r_4=2$, $r_5=r_6=r_7=r_8=3$ and $r_9=r_{10}=5$. We label $R_i$, $1\le i\le 10$ as follows:\\
label $R_1$ by $I_{1}(1)= 1, 55$ (we shall omit the braces of the sequences in each example);\\
label $R_2$ by $D_{1}(56)= 56, 2 $;\\
label $R_3$ by $I_{2}(3)= 3, 53, 5, 51 $;\\
label $R_4$ by $D_{2}(54)= 54, 4, 52, 6 $;\\
label $R_5$ by $I_{3}(7)= 7, 49, 9, 47, 11, 45 $;\\
label $R_6$ by $D_{3}(50)= 50, 8, 48, 10, 46, 12 $;\\
label $R_7$ by $I_{3}(13)= 13, 43, 15, 41, 17, 39 $;\\
label $R_8$ by $D_{3}(44)= 44, 14, 42, 16, 40, 18 $;\\
label $R_9$ by $I_{5}(19)= 19, 37, 21, 35, 23, 33, 25, 31, 27, 29 $;\\
label $R_{10}$ by $D_{5}(38)= 38, 20, 36, 22, 34, 24, 32, 26, 30, 28 $.

\nt One may see that the induced vertex labels are 56, 58 and 285. From Theorem~\ref{thm-chilath=2} we know that $\chi_{la}(\th(2^{[2]},4^{[2]},6^{[4]},10^{[2]}))\ne 2$. Thus $\chi_{la}(\th(2^{[2]},4^{[2]},6^{[4]},10^{[2]}))=3$.
\rsq
\end{example}

\begin{example} Consider the graph $\th(2^{[2]},4^{[2]},6^{[4]},7^{[2]})$. According to the notation used in the proof of Theorem~\ref{thm-even+oddlength}, $l=5$, $h=4$, $q=50$, $r_1=r_2=1$, $r_3=r_4=2$, $r_5=r_6=r_7=r_8=3$ and $r_9=r_{10}=3$. We label $R_i$ for $1\le i\le 8$ and $Q_i$ for $i=9,10$ as follows:\\
label $R_1$ by $I_{1}(1)= 1, 49 $;\\
label $R_2$ by $D_{1}(50)= 50, 2 $;\\
label $R_3$ by $I_{2}(3)= 3, 47, 5, 45 $;\\
label $R_4$ by $D_{2}(48)= 48, 4, 46, 6 $;\\
label $R_5$ by $I_{3}(7)= 7, 43, 9, 41, 11, 39 $;\\
label $R_6$ by $D_{3}(44)= 44, 8, 42, 10, 40, 12 $;\\
label $R_7$ by $I_{3}(13)= 13, 37, 15, 35, 17, 33 $;\\
label $R_8$ by $D_{3}(38)= 38, 14, 36, 16, 34, 18 $;\\
label $Q_9$ by $I^*_{3}(19)= 19, 31, 21, 29, 23, 27, 25 $;\\
label $Q_{10}$ by $D^*_{3}(32)= 32, 20, 30, 22, 28, 24, 26 $.

\nt One may see that the induced vertex labels are 50, 52 and 255. Since $\th(2^{[2]},4^{[2]},6^{[4]}, 7^{[2]})$ is not bipartite, $\chi_{la}(\th(2^{[2]},4^{[2]},6^{[4]}, 7^{[2]}))=3$.
\rsq
\end{example}

\section{Construct some bridge graphs with local antimagic $3$-coloring}\label{sec-induced}

In the last section of this paper,
we shall give several ways to construct some bridge graphs such that
their local antimagic chromatic number is $3$.

\subsection{From bridge graphs with local antimagic 2-coloring}

From Theorem \ref{thm-chilath=2} (1),
there is a local antimagic $2$-coloring of
$\th(4l^{[3l+2]}, (4l+2)^{[l]})$, where $l\ge 1$ (See the proof of Theorem 2.3 in \cite{LSZPN}
for more details).
In what follows, we shall give a local antimagic $3$-coloring of
$\th(4l^{[3l+2]}, (4l+1)^{[l]})$ based on the local antimagic $2$-coloring of
$\th(4l^{[3l+2]}, (4l+2)^{[l]})$.

For a bridge graph $\th(4l^{[3l+2]}, (4l+2)^{[l]})$,
let $Q_j$ be the $(u, v)$-path of length $4l$, where $1\leq j\leq 3l+2$,
and $R_i$ be the $(u, v)$-path of length $4l+2$, where $1\leq i\leq l$.
Let $f$ be a local antimagic $2$-coloring of $\th(4l^{[3l+2]}, (4l+2)^{[l]})$
obtained by the method described by \cite[Theorem 2.3]{LSZPN}.
We see that the first 3 edge labels of $R_i$ starting from $u$ are $i, x-i, y-x+i$,
where $x=q+1$, $y-x=8l+4$ and $q$ is the size of $\th(4l^{[3l+2]}, (4l+2)^{[l]})$.

Now, we do the following steps to create a local antimagic $3$-coloring of $\th(4l^{[3l+2]}, (4l+1)^{[l]})$:
\begin{enumerate}[{Step }1:]
\item Delete the edge labeled by $x-i$ from $\th(4l^{[3l+2]}, (4l+2)^{[l]})$ for each $1\le i\le l$ (i.e., the second edge of $R_i$);
\item Merge two pendent vertices incident to the edges labeled by $i$ and $y-x+l-i+1$ as a new vertex $w_i$ for each $1\le i\le l$.
\end{enumerate}
Then we get the new graph $\th(4l^{[3l+2]}, (4l+1)^{[l]})$,
in which the induced vertex label of $w_i$ is $y-x+l+1=9l+5$,
and the induces vertex labels of other vertices are the same as those in $\th(4l^{[3l+2]}, (4l+2)^{[l]})$, which are $x=16l^2+10l+1$ and $y=16l^2+18l+5$.
As $\th(4l^{[3l+2]}, (4l+1)^{[l]})$ is not in the list of Theorem~\ref{thm-chilath=2},
we have the following result.
\begin{theorem} \label{th4.1}
For $l\ge 1$, $\chi_{la}(\th(4l^{[3l+2]}, (4l+1)^{[l]}))=3$.
\end{theorem}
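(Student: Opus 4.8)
The plan is to obtain the required $3$-colouring of $\th(4l^{[3l+2]}, (4l+1)^{[l]})$ by a small surgery on the known local antimagic $2$-colouring $f$ of $\th(4l^{[3l+2]}, (4l+2)^{[l]})$ furnished by Theorem~\ref{thm-chilath=2}(1) (see \cite{LSZPN}). Write $q=(3l+2)(4l)+l(4l+2)=16l^2+10l$ for its size, so the two induced colours are $x=q+1=16l^2+10l+1$ and $y=x+(8l+4)=16l^2+18l+5$. Recall that along each length-$(4l+2)$ path $R_i$ the first three edges from $u$ carry the labels $i$, $x-i$, $y-x+i$; hence if $P_1^{(i)},P_2^{(i)},P_3^{(i)},\dots$ denote the internal vertices of $R_i$ read from $u$, then $f^+(P_1^{(i)})=x$, $f^+(P_2^{(i)})=y$, $f^+(P_3^{(i)})=x$. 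The key arithmetic observation is that the labels $x-1,x-2,\dots,x-l$ deleted in Step~1 are exactly the $l$ largest labels $q,q-1,\dots,q-l+1$, so after the deletion the surviving labels form precisely the initial segment $[1,q-l]=[1,16l^2+9l]$, which is what a local antimagic labeling of the new graph needs.

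Next I would identify the graph produced by Steps~1--2. Deleting the second edge of $R_i$ (label $x-i$) splits $R_i$ into the single edge $uP_1^{(i)}$ (label $i$) and a path of length $4l$ from $P_2^{(i)}$ to $v$ whose first edge has label $y-x+i$. Step~2 identifies the pendant $P_1^{(i)}$ (incident to the label $i$) with the pendant $P_2^{(l-i+1)}$ (incident to the label $y-x+(l-i+1)$) into a new vertex $w_i$; this splices the edge $uP_1^{(i)}$ onto the length-$4l$ tail of $R_{l-i+1}$, producing a $(u,v)$-path $u\,w_i\cdots v$ of length $4l+1$. The $3l+2$ paths $Q_j$ of length $4l$ are untouched, and the $l$ new paths are internally disjoint from one another and from the $Q_j$'s, so the resulting graph is exactly $\th(4l^{[3l+2]}, (4l+1)^{[l]})$; its size $q-l=16l^2+9l$ agrees with $(3l+2)(4l)+l(4l+1)$. (The case $2i=l+1$, where $P_1^{(i)}$ and $P_2^{(i)}$ of the \emph{same} path $R_i$ get merged, is harmless and gives the same conclusion.)

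Then I would verify that $f$, read on the new graph, is a local antimagic $3$-colouring. The only vertices whose incident-edge set changed are the $w_i$, and $f^+(w_i)=i+\bigl(y-x+(l-i+1)\bigr)=y-x+l+1=9l+5$, the same value for every $i$; every other vertex retains its old $f$-sum $x$ or $y$. For properness it suffices to observe that each $w_i$ is adjacent only to $u$ (colour in $\{x,y\}$) and to $P_3^{(l-i+1)}$ (colour $x$), that no two $w_i$ are adjacent, and that $9l+5<x$ for all $l\ge 1$. Hence the three colours $x$, $y$, $9l+5$ give a proper vertex colouring, so $\chi_{la}(\th(4l^{[3l+2]}, (4l+1)^{[l]}))\le 3$.

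Finally, for the lower bound I would invoke Theorem~\ref{thm-chilath=2} with $s=4l+2\ge 6$: the graph $\th(4l^{[3l+2]}, (4l+1)^{[l]})$ is neither $K_{2,s}$ nor any of the listed families, since each of those consists solely of even-length paths whereas this graph has $l\ge 1$ paths of odd length $4l+1$ (indeed it contains the odd cycle of length $8l+1$ formed by one $4l$-path together with one $(4l+1)$-path, so it is not even bipartite). Therefore $\chi_{la}\ne 2$, and combined with the upper bound we get $\chi_{la}(\th(4l^{[3l+2]}, (4l+1)^{[l]}))=3$. I expect the main point requiring care to be the combinatorial bookkeeping of the merge in Step~2 — pinning down which two pendant vertices are identified and confirming that the outcome is genuinely a bridge graph of internally disjoint $(u,v)$-paths — together with the observation that the deleted labels are exactly the top $l$ labels; the rest is routine arithmetic and a direct appeal to the cited $2$-colouring.
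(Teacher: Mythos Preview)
Your proposal is correct and follows essentially the same route as the paper: both delete the second edge (label $x-i$) of each long path $R_i$, merge the pendant incident to label $i$ with the one incident to label $y-x+l-i+1$, and read off the three colours $x$, $y$, $9l+5$. You are in fact more careful than the paper on two points it leaves implicit: that the deleted labels $x-1,\dots,x-l$ are exactly $q,\dots,q-l+1$, so the surviving labels form the required interval $[1,q-l]$; and that the resulting graph really is $\th(4l^{[3l+2]},(4l+1)^{[l]})$ with internally disjoint paths. Your lower-bound argument via the odd $(8l+1)$-cycle is also slightly cleaner than merely checking the graph is absent from the list in Theorem~\ref{thm-chilath=2}.
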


\begin{example}\label{ex1}
We shall give a local antimagic $3$-coloring of the bridge graph $\th(8^{[8]}, 9^{[2]})$
in the following.
Firstly, we consider the bridge graph $\th(8^{[8]}, 10^{[2]})$.
Let $R_1, R_2$ be the $(u,v)$-paths of length $10$ and
$Q_i$ be the $(u,v)$-path of length $8$, where $1\le i\le 8$.
From the proof of Theorem 2.3 in \cite{LSZPN}), we\\
label $R_1$ by 1, 84, 21, 64, 41, 44, 61, 24, 81, 4;\\
label $R_{2}$ by 2, 83, 22, 63, 42, 43, 62, 23, 82, 3;\\
label $Q_1$ by 7, 78, 27, 58, 47, 28, 67, 18;\\
label $Q_2$ by 8, 77, 28, 57, 48, 37, 68, 17;\\
label $Q_3$ by 9, 76, 29, 56, 49, 36, 69, 16;\\
label $Q_4$ by 11, 74, 31, 54, 51, 34, 71, 14;\\
label $Q_5$ by 13, 72, 33, 52, 53, 32, 73, 12;\\
label $Q_6$ by 15, 70, 35, 50, 55, 30, 75, 10;\\
label $Q_7$ by 19, 66, 39, 46, 59, 26, 79, 6;\\
label $Q_8$ by 20, 65, 40, 45, 60, 25, 80, 5.

We obtain $\th(8^{[8]}, 9^{[2]})$ by deleting the edges labelled by $83$ and $84$ in $\th(8^{[8]}, 10^{[2]})$,
merging the pendent vertices incident to the edges labelled by $1$ and $83$,
and merging the pendent vertices incident to the edges labelled by $2$ and $84$.
Thus the new paths of length $9$ in $\th(8^{[8]}, 9^{[2]})$ are labelled by
$2, 21, 64, 41, 44, 61, 24, 81, 4$ and
$1, 22, 63, 42, 43, 62, 23, 82, 3$.
The labels of other paths of length $8$ are the same as those in $\th(8^{[8]}, 10^{[2]})$.
It is clear that the induced colors are $85, 105 and 23$.
Therefore, this is a local antimagic $3$-coloring for $\th(8^{[8]}, 9^{[2]})$.
\rsq\end{example}

\subsection{From some special sequences}

In what follows, we shall construct some bridge graphs of sizes $4m+3$ and $4m$,
where $m\geq 1$, from some special sequences
and show that their local antimagic chromatic numbers are $3$.

Let $A_{2m+2}(4m+3;-1)$ and $A_{2m+1}(1;1)$ be two arithmetic progressions, where $m\geq 1$.
We break the sequence $A_{2m+2}(4m+3;-1)\diamond A_{2m+1}(1;1)$ into 3 subsequences of lengths $2m+1$, $2k-1$ and $2m-2k+3$ as follows:
%{\fontsize{10}{10}\selectfont
\begin{align*}
S_1 & =A_{m+1}(4m+3;-1)\diamond A_{m}(1;1)=\{4m+3, 1,\dots, m, 3m+3\};\\
S_2 & =A_{k}(m+1;1)\diamond A_{k-1}(3m+2;-1)=\{m+1, 3m+2, \dots, 3m-k+2, m+k\},\\
S^*_3 & =A_{m-k+2}(3m-k+3;-1)\diamond A_{m-k+1}(m+k+1;1)=\{3m-k+3, m+k+1,\dots, 2m+1, 2m+2\},\end{align*}
where $1\leq k\leq m$. Note that $S_2=\{m+1\}$ when $k=1$.
let $S_3$ be the reverse of $S^*_3$, i.e., $S_3=\{2m+2,2m+1, \dots, m+k+1, 3m-k+3\}$.

Clearly, the sum of two consecutive terms of $S_i$ for each $1\le i\le 3$ is either $q+1$ or $q$.
The sum of the first terms of $S_1$, $S_2$ and $S_3$ is $(4m+3)+(m+1)+(2m+2)=7m+6$,
and that of the last terms of $S_1$, $S_2$ and $S_3$ is $(3m+3)+(m+k)+(3m-k+3)=7m+6$.

Now we label the edges of three $(u,v)$-paths of lengths $2m+1$, $2k-1$ and $2m-2k+3$ by the sequences $S_1$, $S_2$ and $S_3$, respectively.
Therefore, there is a local antimagic 3-coloring of $\th=\th(2m+1, 2k-1, 2m-2k+3)$ for each $1\le k\le m$.
Since $\th$ is not in the list of Theorem~\ref{thm-chilath=2}, we have the following result.

\begin{theorem}
For $1\le k\le m$, $\chi_{la}(\th(2k-1, 2m-2k+3, 2m+1))=3$.
\end{theorem}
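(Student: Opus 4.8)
The plan is to construct an explicit local antimagic $3$-coloring of $\th(2k-1, 2m-2k+3, 2m+1)$ for each $1 \le k \le m$, exactly following the recipe already set up in the paragraph preceding the statement, and then invoke Theorem~\ref{thm-chilath=2} to upgrade the bound $\chi_{la} \le 3$ to equality. Concretely, I would first record that the graph is a $3$-bridge graph with $q = (2k-1) + (2m-2k+3) + (2m+1) = 4m+3$ edges, so that a local antimagic labeling must be a bijection onto $[1, 4m+3]$. Then I would verify that $S_1, S_2, S_3$ as defined really do partition $[1, 4m+3]$: the interleaving $A_{2m+2}(4m+3; -1) \diamond A_{2m+1}(1;1)$ plainly uses each label in $[1,4m+3]$ exactly once (the first progression is $\{m+2, m+3, \dots, 4m+3\}$ in some order, the second is $\{1, \dots, 2m+1\}$), and breaking this single sequence into consecutive blocks of lengths $2m+1$, $2k-1$, $2m-2k+3$ — which sum to $4m+3$ — just redistributes those labels; reversing $S_3^*$ to get $S_3$ changes nothing about which labels appear.

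The heart of the argument is checking the three induced vertex colors. The two internal (degree-$2$) vertices on each path receive, as their $f^+$ value, the sum of two consecutive terms of the corresponding $S_i$; by Lemma~\ref{lem-diamond} (or a direct computation, since the $\diamond$ of an increasing-by-$1$ and a decreasing-by-$1$ progression has consecutive sums alternating between two values differing by $1$), every such sum is either $q$ or $q+1 = 4m+4$. So all internal vertices get one of the two colors $q$ or $q+1$, and adjacent internal vertices on the same path get different ones of these two — that is the only adjacency among internal vertices. The two hub vertices $u$ and $v$ each have degree $3$: $f^+(u)$ is the sum of the first terms of $S_1, S_2, S_3$ and $f^+(v)$ is the sum of the last terms, and the computation already displayed gives both equal to $7m+6$. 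Thus the color set is $\{q, q+1, 7m+6\} = \{4m+3, 4m+4, 7m+6\}$, which has exactly three elements once $7m+6 \notin \{4m+3, 4m+4\}$, i.e. for all $m \ge 1$; and $u \sim v$ only through path-internal vertices, so the coloring is proper. Hence $f$ is a local antimagic $3$-coloring and $\chi_{la}(\th(2k-1, 2m-2k+3, 2m+1)) \le 3$.

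The main obstacle — really the only place care is needed — is the bookkeeping in the partition and the consecutive-sum check: one must confirm that the break points fall so that each $S_i$ is itself an alternating $\diamond$-sequence of the right shape (this is why the block lengths are chosen as $2m+1 = (m+1) + m$, $2k-1 = k + (k-1)$, $2m-2k+3 = (m-k+2) + (m-k+1)$, each splitting an odd length into a near-half and the complementary half), and that within each block the two alternating consecutive-sum values are indeed $q$ and $q+1$ rather than some other pair. Since the global sequence $A_{2m+2}(4m+3;-1) \diamond A_{2m+1}(1;1)$ has all consecutive sums equal to $4m+4$ or $4m+3$ from the start, and each $S_i$ is a contiguous chunk of it (with $S_3$ additionally reversed, which preserves the multiset of consecutive sums), this is automatic, but it should be stated explicitly via Lemma~\ref{lem-diamond}. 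Finally, to conclude $\chi_{la} = 3$ rather than merely $\le 3$, I would note that $\th(2k-1, 2m-2k+3, 2m+1)$ has size $4m+3 = 2 \cdot 3 + (4m - 3) > 2s + 2$ with $s = 3$ for $m \ge 2$ (and handle $m=1$, i.e. $\th(1,3,3)$, by inspection, as it has size $7 > 8$ fails so it must be size-$2s+2$ or smaller — actually $q=7=2s+1$, so it is covered by the $\chi_{la} \ge 3$ results for small bridge graphs), and observe directly that it does not match any family listed in Theorem~\ref{thm-chilath=2}: the paths have three distinct-parity-and-length patterns that cannot be fit to the templates there (all of which, for $s=3$, force specific length multiplicities). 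Therefore $\chi_{la} \ne 2$, and combined with the upper bound we get $\chi_{la} = 3$.
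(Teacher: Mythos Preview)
Your approach is essentially identical to the paper's: you label the three $(u,v)$-paths by the sequences $S_1,S_2,S_3$ already set up in the text, check that interior degree-$2$ vertices receive the two alternating colors $q=4m+3$ and $q+1=4m+4$, compute $f^+(u)=f^+(v)=7m+6$, and invoke Theorem~\ref{thm-chilath=2} for the lower bound. The bookkeeping you sketch (that the three blocks partition $[1,4m+3]$ and that consecutive sums remain in $\{q,q+1\}$) is exactly what the paper relies on.

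There is, however, one explicit claim in your write-up that is false and creates a genuine gap: you assert that ``$u\sim v$ only through path-internal vertices, so the coloring is proper.'' When $k=1$ the path carrying $S_2$ has length $2k-1=1$, i.e.\ it is the single edge $uv$, so $u$ and $v$ \emph{are} adjacent; yet the computation gives $f^+(u)=f^+(v)=7m+6$, and hence for $k=1$ the labeling described is \emph{not} a local antimagic labeling at all. The paper's construction has precisely the same defect (it too asserts the labeling works for all $1\le k\le m$), so you have faithfully reproduced its argument including this oversight; for $2\le k\le m$ everything you wrote goes through and matches the paper, but the boundary case $k=1$ (the graphs $\th(1,2m+1,2m+1)$) requires a different labeling. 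One small slip: the set underlying $A_{2m+2}(4m+3;-1)$ is $\{2m+2,\dots,4m+3\}$, not $\{m+2,\dots,4m+3\}$ as you wrote.
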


Suppose that we break the above sequence $S_1$ into the following three subsequences of lengths $2l$, $2$ and $2m-2l-1$, where $1\le l\le m-1$:
\begin{align*}
T_1 & =A_{l}(4m+3;-1)\diamond A_{l}(1;1)=\{4m+3, 1,\dots, 4m-l+4, l\};\\
T_2 & =\{l+1, 4m-l+3\};\\
T_3 & =A_{m-l}(4m-l+2;-1)\diamond A_{m-l-1}(l+2;1)=\{4m-l+2, l+2,\dots, m, 3m+3\}.
\end{align*}

We label the edges of five $(u,v)$-paths of lengths $2l$, $2$, $2m-2l-1$, $2k-1$ and $2m-2k+3$ by the sequences $T_1$, $T_2$, $T_3$, $S_2$ and $S_3$, respectively.
Then there is a local antimagic 3-coloring for $\th=\th(2, 2l, 2m-2l-1, 2k-1, 2m-2k+3)$ when $1\le k\le m$ and $1\le l\le m-1$.
Also as $\th$ is not in the list of Theorem~\ref{thm-chilath=2}, we get the following result.

\begin{theorem}
For $1\le k\le m$ and $1\le l\le m-1$, $\chi_{la}(\th(2, 2l, 2m-2l-1, 2k-1, 2m-2k+3))=3$.
\end{theorem}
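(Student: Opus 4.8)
The plan is to mimic the construction already used for $\th(2m+1,2k-1,2m-2k+3)$, but to split the long path (the one of length $2m+1$, labelled by $S_1$) into three pieces, thereby turning one $(u,v)$-path into three $(u,v)$-paths of lengths $2l$, $2$ and $2m-2l-1$. Since $2l+2+(2m-2l-1)=2m+1$, the new graph $\th(2,2l,2m-2l-1,2k-1,2m-2k+3)$ has exactly the same edge set and the same total size $q=4m+3$ as the old one, so the labels $T_1,T_2,T_3,S_2,S_3$ together still use precisely $[1,q]$. The induced colours at $u$ and $v$ are unchanged because the multiset of labels incident to $u$ (the first terms of the five sequences) and to $v$ (the last terms) is the same as before: one checks that the first terms of $T_1,T_2,T_3$ are $4m+3$, $l+1$, $4m-l+2$, which sum to $9m+6$, matching the first term $4m+3$ of $S_1$ plus the first terms $m+1$ and $2m+2$ of $S_2,S_3$; similarly the last terms $l$, $4m-l+3$, $3m+3$ of $T_1,T_2,T_3$ sum to $7m+6$, matching the last term $3m+3$ of $S_1$ together with the last terms of $S_2,S_3$. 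Hence $f^+(u)$ and $f^+(v)$ retain the value $l(q+1)$-type constant (here the single colour that was computed in the previous theorem), and in particular $f^+(u)=f^+(v)$.

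Next I would verify that every internal vertex of every path still receives a colour in $\{q,q+1\}$ (equivalently $\{4m+3,4m+4\}$). For the three unchanged paths (labelled $S_2$ and $S_3$, and the $2m-2k+3$ and $2k-1$ lengths) this is exactly Lemma~\ref{lem-diamond}-type alternation already established. For the three new paths I must check: (a) inside $T_1=A_l(4m+3;-1)\diamond A_l(1;1)$ consecutive terms alternate in sum between $4m+4$ and $4m+3$; (b) the single internal vertex of the length-$2$ path $T_2=\{l+1,4m-l+3\}$ has colour $4m+4$; (c) inside $T_3$ consecutive terms again alternate appropriately; and (d) at the two junctions where $T_1$ meets $T_2$ and $T_2$ meets $T_3$, the transition sums $l+(l+1)$ and $(4m-l+3)+(4m-l+2)$ — wait, these are internal-vertex sums within single paths, so rather the relevant checks are the last term of $T_1$ plus first term of nothing (the paths are distinct). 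Actually since the five paths are edge-disjoint $(u,v)$-paths, the only vertices to examine are $u$, $v$, and the internal vertices of each path separately; there are no junctions. So it suffices that each of $T_1,T_2,T_3,S_2,S_3$ internally alternates between partial sums $q$ and $q+1$, which is a direct arithmetic-progression computation of the $\diamond$ interleaving, entirely analogous to Lemma~\ref{lem-diamond}.

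The remaining point is that these three internal colours $4m+3$, $4m+4$ and $l(q+1)$ (the common value at $u,v$) are pairwise distinct and that $f$ is a bijection onto $[1,q]$; distinctness of the colours follows from $l(q+1)\ge q+1>q+1>q$ being far larger than $4m+4$ for $m\ge 1$ once one writes out the actual common vertex sum, and bijectivity follows from the explicit description of $T_1\cup T_2\cup T_3=S_1=\{4m+3,1,2,\dots,m,3m+3\}$ together with $S_2\cup S_3$ covering the complementary block. This gives a local antimagic $3$-coloring of $\th(2,2l,2m-2l-1,2k-1,2m-2k+3)$, so $\chi_{la}\le 3$. Finally, to upgrade $\le 3$ to $=3$ I invoke Theorem~\ref{thm-chilath=2}: one checks that for $1\le k\le m$ and $1\le l\le m-1$ the graph $\th(2,2l,2m-2l-1,2k-1,2m-2k+3)$ never appears in the list (1)--(4) there — it has an odd-length path so it is not bipartite and not $K_{2,s}$, and the parameter patterns in (2a)--(4) do not match a $5$-path graph of this shape — hence $\chi_{la}\ge 3$.

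I expect the main obstacle to be purely bookkeeping: confirming that after the split the junction terms $l,l+1$ and $4m-l+3,4m-l+2$ (the boundary entries of $T_1,T_2,T_3$) are consistent with the claimed alternation pattern for \emph{all} admissible $l$, including the degenerate-looking extremes $l=1$ and $l=m-1$ (where $T_3$ has length $2m-2l-1=1$, i.e. is a single edge, and one must make sure the $\diamond$ notation still parses), and — the genuinely non-routine part — the case analysis needed to certify that this five-path family is disjoint from the finite/infinite families in Theorem~\ref{thm-chilath=2}, since an overlap there would make the theorem false as stated.
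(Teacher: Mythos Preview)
Your approach is exactly the paper's: split $S_1$ into $T_1,T_2,T_3$ of lengths $2l,2,2m-2l-1$, label the five $(u,v)$-paths by $T_1,T_2,T_3,S_2,S_3$, observe that internal sums are still $q$ or $q+1$, and appeal to Theorem~\ref{thm-chilath=2} (your non-bipartite shortcut is fine and in fact cleaner than the paper's bare ``not in the list'').

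There is, however, a genuine slip in your vertex-$u,v$ bookkeeping. The colours at $u$ and $v$ are \emph{not} unchanged: in the $3$-path graph $u$ has degree $3$ and $f^+(u)=7m+6$, whereas in the $5$-path graph $u$ has degree $5$. Your arithmetic is also off: the first terms $4m+3,\ l+1,\ 4m-l+2$ of $T_1,T_2,T_3$ sum to $8m+6$, not $9m+6$, and this does \emph{not} match $4m+3+(m+1)+(2m+2)=7m+6$. What actually happens is
\[
f^+(u)=(4m+3)+(l+1)+(4m-l+2)+(m+1)+(2m+2)=11m+9,
\]
\[
f^+(v)=l+(4m-l+3)+(3m+3)+(m+k)+(3m-k+3)=11m+9,
\]
so the crucial equality $f^+(u)=f^+(v)$ survives, and $11m+9>4m+4=q+1$ gives three distinct colours. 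Note that this equality relies on the paper's ordering $T_2=\{l+1,\,4m-l+3\}$, which \emph{reverses} the natural order of those two terms inside $S_1$; you used that ordering without flagging it, and without the swap one gets $f^+(u)\ne f^+(v)$ and hence four colours. Once you correct the ``unchanged'' claim and the arithmetic, and make the reversal of $T_2$ explicit, the argument is complete and coincides with the paper's.
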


\begin{example} Suppose $m=3$, $k=2$. Then we have that
\begin{align*}
S_1 & = 15, 1, 14, 2, 13, 3, 12;\\
S_2 & = 4, 11, 5;\\
S_3 & = 8, 7, 9, 6, 10.
\end{align*}

Thus we label the $(u,v)$-paths of lengths $7, 3, 5$ by $S_1, S_2, S_3$, respectively.
Then we get a local antimagic $3$-coloring of $\th(3,5,7)$ with induced colors $15, 16, 27$,
implying that $\chi_{la}(\th(3,5,7))=3$.

Now we break $S_1$ into $T_1=15, 1$, $T_2= 2, 14$ and $T_3=13, 3, 12$.
Then the $(u,v)$-paths of lengths $2, 2, 3, 3, 5$ are labelled by $T_1, T_2, T_3, S_2, S_3$, respectively.
Therefore, there is a local antimagic $3$-coloring for $\th(2,2,3,3,5)$ with induced colors $15, 16, 42$.
Thus, $\chi_{la}(\th(2,2,3,3,5))=3$. \rsq
\end{example}

In what follows, we shall construct the bridge graph of size $4m$, where $m\geq 1$, from the
sequences $A_m(4m;-1)$, $A_m(1;1)$, $A_m(2m;-1)$ and $A_m(2m+1;1)$.
Let
\begin{align*}S_1 & =A_m(4m;-1)\diamond A_m(1;1)=\{4m, 1, \dots, 3m+1, m\};\\
S_2 & =A_m(2m;-1)\diamond A_m(2m+1;1)=\{2m, 2m+1, \dots, m+1, 3m\}.
\end{align*}
Note that the sums of two consecutive terms of $S_1$ (or $S_2$) are $4m+1$ and $4m$, alternatively.

We break $S_1$ into $2h$ subsequences with even lengths, i.e.,
\begin{align*}S_1^1 & =\{4m, 1, \dots, 4m+1-x_1, x_1\};\\
S_1^2 & =\{4m-x_1, x_1+1, \dots, 4m+1-x_2, x_2\};\\
\vdots & \qquad \qquad\vdots\\
S_1^{2h-1} & =\{4m-x_{2h-2}, x_{2h-2}+1, \dots, 4m+1-x_{2h-1}, x_{2h-1}\};\\
S_1^{2h} & =\{4m-x_{2h-1}, x_{2h-1}+1, \dots, 4m+1-x_{2h}, x_{2h}\}
\end{align*}

Note that $x_{2h}=m$ which is the last term of $S_1$.
That is for each $1\le i\le 2h$,
\[
S_1^i=\{4m-x_{i-1}, x_{i-1}+1, \dots, 4m+1-x_{i}, x_{i}\},
\]
where $x_0=0$ by convention. The length of $S_1^i$ is $2a_i=2(x_i-x_{i-1})$ for $1\le i\le 2h$.

Similarly, we break $S_2$ into $2k+1$ subsequences with even lengths,
i.e., for each $1\le j\le 2k+1$,
\[
S_2^j=\{4m-y_{j-1}, y_{j-1}+1, \dots, 4m+1-y_{j}, y_{j}\},
\]
where $y_0=2m$ by convention.
Note that $y_{2k+1}=3m$ and the length of $S_2^j$ is $2b_j=2(y_j-y_{j-1})$ for $1\le j\le 2k+1$.

The sums of two consecutive terms of $S_1^i$ (or $S_2^j$) are still $4m+1$ and $4m$, alternatively.
Now we reverse the order of the sequence of each $S_1^{2l}$ and keep the order of the sequence $S_1^{2l-1}$ for $1\le l\le h$.
Also reverse the order of the sequence of each $S_2^{2l}$ for $1\le l\le k$ and keep the order of the sequence $S_2^{2l-1}$ for $1\le l\le k+1$.
The new sequences are denoted by $T_1^i$ and $T_2^j$ accordingly.

The sum of all first term of $T_1^i$ is $4mh+x_{2h}=4mh+m$,
and that of all last term of $T_1^i$ is $4mh$.
Similarly, the sum of all first term of $T_2^j$ is $4m(k+1)-y_{0}=4m(k+1)-2m$,
and that of all last term of $T_2^j$ is $4mk+y_{2k+1}=4mk+3m$.
Thus the sum of all first term of $T_1^i$ and $T_2^j$ is
\[
4mh+m+4m(k+1)-2m=4m(h+k)+3m,
\]
and that of all last term of $T_1^i$ and $T_2^j$ is
\[4mh+4mk+3m=4m(h+k)+3m.\]

We label the edges of $2(h+k)+1$ $(u,v)$-paths of lengths $2a_1$, $2a_2$, $\dots$, $2a_{2h}$, $2b_1$, $2b_2$, $\dots$, $2b_{2k+1}$ by the sequences $T_1^1$, $T_1^2$, $\dots$, $T_1^{2h}$, $T_2^1$, $T_2^2$, $\dots$, $T_2^{2k+1}$, respectively.
Then the following result is obtained.

\begin{theorem}
Let $\th=\th(2a_1,2 a_2, \dots, 2a_{2h}, 2b_1, 2b_2, \dots, 2b_{2k+1})$, where $\sum\limits_{i=1}^{2h} a_i=\sum\limits_{j=1}^{2k+1} b_j$. Then $\chi_{la}(\th)=3$ if
$\th\notin \{\th(2,4^{[3]},6),\ \th(6, 12^{[7]}, 14^{[3]}),\ \th(2n, 4n-2, 6n-2, (8n-2)^{[n-2]})\}$,
where $n\ge 2$.
\end{theorem}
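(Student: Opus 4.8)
The statement has two halves and the first is essentially already in hand. For the bound $\chi_{la}(\th)\le 3$ one simply invokes the labeling constructed immediately above the theorem: putting $m=\sum_{i=1}^{2h}a_i=\sum_{j=1}^{2k+1}b_j$, that labeling uses the $q=4m$ edge labels and induces exactly the three colors $4m$, $4m+1$ and $4m(h+k)+3m$, and these are pairwise distinct since $h\ge 1$ (otherwise $\sum b_j=0$, impossible) forces $m\ge 2$ and $h+k\ge 1$, whence $4m(h+k)+3m\ge 7m>4m+1$. So the real content of the theorem is the lower bound $\chi_{la}(\th)\ge 3$; since $\chi_{la}(\th)\ge 2$ is automatic, this amounts to showing $\chi_{la}(\th)\ne 2$ whenever $\th$ is not one of the listed exceptions.

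The plan is to compare the family $\th=\th(2a_1,\dots,2a_{2h},2b_1,\dots,2b_{2k+1})$ with Theorem~\ref{thm-chilath=2}: by that theorem, $\chi_{la}(\th)=2$ (for $s\ge 3$) forces $\th$ to be $K_{2,s}$ with even $s$ or one of the graphs (1)--(4) listed there, so it suffices to decide which graphs in that list can be written in our form, and to check that every such graph lies in the exceptional set. Our $\th$ has two structural features: it has $2(h+k)+1$ paths, an \emph{odd} number, and, upon halving every path length, the resulting multiset splits as $\{a_i\}\cup\{b_j\}$ into two sub-multisets of equal sum. I would first use the parity feature to clear most of the list: the graphs in (1), (3a), (3b) and $K_{2,s}$ (even $s$) have, respectively, $4l+2$, $2l$, $2l$ and $s$ paths, all \emph{even}, so none of them equals our $\th$ (and $\th=K_{2,s}=\th(2^{[s]})$ would in any case force $2h=2k+1$). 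For (2a), $\th(2l-2,(4l-2)^{[3l-1]})$ has $3l$ paths, which is odd when $l$ is odd, but here the second feature fails: a candidate sub-sum of the halved-length multiset $\{l-1\}\cup\{2l-1\}^{[3l-1]}$ has the form $j(2l-1)+\varepsilon(l-1)$ with $\varepsilon\in\{0,1\}$, and multiplying the target half-sum $3l^{2}-2l$ by $4$ and reducing modulo $2l-1$ (using $2l\equiv 1$) gives residue $-1$, while $4(j(2l-1)+\varepsilon(l-1))\equiv -2\varepsilon$, so matching would need $2l-1\mid 1$, impossible for $l\ge 2$. Hence no graph of (2a) is of our form.

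That leaves (2b) and (4). Of the three graphs in (2b), $\th(4,8^{[5]},10^{[2]})$ has $8$ paths and is killed by parity, while $\th(2,4^{[3]},6)$ and $\th(6,12^{[7]},14^{[3]})$ have an odd number of paths and do admit a balanced bipartition of their halved lengths (for instance $\{2,3\}$ against $\{1,2,2\}$, and $\{6,6,7,7,7\}$ against $\{3,6,6,6,6,6\}$), so these two are indeed of our form---and they are two of the stated exceptions. For (4), $\th(2t,4s-6-2t,2s-4,(4s-6)^{[s-3]})$ has $s$ paths, so it can match our $\th$ only when $s$ is odd; writing $s=2p+1$, the halved lengths form $\{t,2s-3-t,s-2\}\cup\{2s-3\}^{[s-3]}$, of total sum $2(s-1)(s-2)$. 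I would enumerate the possible sub-multiset sums---parametrised by how many copies of $2s-3$ are taken, together with the three independent choices of whether to include $t$, $2s-3-t$, and $s-2$---set each equal to $(s-1)(s-2)$, and use the constraint $\tfrac{2s-3}{8}\le t\le\tfrac{6s-5}{8}$ together with the same ``multiply by $4$ and reduce mod $2s-3$'' device (which kills all the cases not involving $t$, since they reduce to $2s-3\mid 1$) to show that a balanced bipartition exists exactly when $t=\tfrac{s-1}{2}=p$. Substituting this forced value of $t$ reproduces (after re-indexing) precisely the one-parameter subfamily named in the statement. Putting the pieces together, the only members of the Theorem~\ref{thm-chilath=2} list lying in our family are the listed exceptions, so every other $\th$ of our form satisfies $\chi_{la}(\th)\ne 2$, hence $\chi_{la}(\th)=3$.

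The routine parts are the upper bound (already constructed) and the parity observations; the one genuinely computational step, and the place where I expect to have to be careful---in particular with small or degenerate values of $s$ and $l$ where a multiplicity drops to zero or two listed path lengths coincide---is the analysis of family (4): verifying that $t=(s-1)/2$ is the \emph{unique} value of $t$ for which the halved-length multiset of an item-(4) graph with $s$ odd partitions into two equal-sum parts.
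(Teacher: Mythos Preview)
Your proposal is correct and follows essentially the same route as the paper: invoke the preceding construction for $\chi_{la}(\th)\le 3$, then use Theorem~\ref{thm-chilath=2} together with the observation that $\th$ has an odd number $2h+2k+1$ of internal paths to reduce the lower bound to an arithmetic check on families (2a) and (4), the latter yielding the unique value $t=(s-1)/2$. Your treatment is in fact a bit more careful than the paper's in two respects: you explicitly dispose of $K_{2,s}$ and of item~(2b) (the paper jumps straight to ``item (2a) or 4'' without noting that two of the three graphs in (2b) have an odd number of paths and are precisely the first two listed exceptions), and your ``multiply by $4$ and reduce modulo $2s-3$ (resp.\ $2l-1$)'' device packages the case analysis for (2a) and (4) more cleanly than the paper's direct solution of the corresponding Diophantine equations.
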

\begin{proof}
Let $\th=\th(2a_1,2 a_2, \dots, 2a_{2h}, 2b_1, 2b_2, \dots, 2b_{2k+1})$.
From the above discussion, $\chi_{la}(\th)\le 3$.

If $\chi_{la}(\th)=2$, then $\th$ is one of graphs listed in item $(2a)$ or $4$ of Theorem~\ref{thm-chilath=2}, since the number of internal paths is odd.

\begin{enumerate}[(a)]
\item Suppose $\th=\th(2l-2, (4l-2)^{[3l-1]})$, where $l\ge 2$.
The size of $\th$ is $(2l-2)+(3l-1)(4l-2)=4(3l^2-2l)$.

If $2l-2$ is one of $2a_i$, then all $2b_j$ are $4l-2$.

If $2l-2$ is one of $2b_j$, then all $2a_i$ are $4l-2$.

For each case, $y(2l-1)=3l^2-2l$ for some $y\in\N$. Thus $y\equiv 0\pmod l$. Let $y=kl$ for some $k\in\N$. Then $k(2l-1)=3l-2$ or $(k-1)(2l-1)=l-1$, which is no solution when $l\ge 2$.

\item Suppose $\th=\th(2t, 4s-6-2t, 2s-4, (4s-6)^{[s-3]})$, where $\frac{2s-3}{8}\le t\le \frac{6s-5}{8}$ and $s\ge 4$. Moreover, $s$ is odd, say $s=2n+1\ge 5$. The size of $\th$ is $4(s^2-3s+2)$.
Then,
    \begin{align*}ta+(2s-3-t)b+(s-2)c+(2s-3)y & =s^2-3s+2\\
    t(1-a)+(2s-3-t)(1-b)+(s-2)(1-c)+(2s-3)(s-3-y)& =s^2-3s+2
    \end{align*} for some $a,b,c\in\{0,1\}$ and $0\le y\le 2n-2$.

    Suppose $a=b=c=0$ (it is the same case when $a=b=c=1$). Then
    \[y=\frac{(2n+1)^2-3(2n+1)+2}{2(2n+1)-3}=\frac{4n^2-2n}{4n-1}=n-\frac{n}{4n-1}\notin\Z.\]

    Suppose $a=b=0$ and $c=1$ (it is the same case when $a=b=1$ and $c=0$). Then
    \[y=\frac{(2n+1)^2-4(2n+1)+4}{2(2n+1)-3}=\frac{4n^2-4n+1}{4n-1}=n-1+\frac{n}{4n-1}\notin\Z.\]

    Suppose $a=c=0$ and $b=1$ (it is the same case when $a=c=1$ and $b=0$). Then
    \[y=\frac{(2n+1)^2-5(2n+1)+5+t}{4n-1}=\frac{4n^2-6n+1+t}{4n-1}=n-2+\frac{3n-1+t}{4n-1},\] where $\frac{4n-1}{8}\le t\le \frac{12n+1}{8}$.
Thus $\frac{3n-1+t}{4n-1}\in\N$ and $0<\frac{28n-9}{32n-8}\le \frac{3n-1+t}{4n-1}\le \frac{36n-7}{32n-8}<2$, which implies that
$\frac{3n-1+t}{4n-1}=1$, i.e., $t=n$.
Therefore, $\th=\th(6n-2,(8n-2)^{[n-1]}, 2n, 4n-2, (8n-2)^{[n-1]})=\th(2n, 4n-2, 6n-2, (8n-2)^{[n-2]})$, which is excluded from the hypothesis.

     Suppose $b=c=0$ and $a=1$ (it is the same case when $b=c=1$ and $a=0$). Then
     \[y=\frac{(2n+1)^2-3(2n+1)+2-t}{4n-1}=\frac{4n^2-2n-t}{4n-1}=n-\frac{n+t}{4n-1},\]
     where $\frac{4n-1}{8}\le t\le \frac{12n+1}{8}$. Thus $\frac{n+t}{4n-1}\in\N$ and $0<\frac{12n-1}{32n-8}\le \frac{n+t}{4n-1}\le \frac{20n+1}{32n-8}<1$. There is no solution.
\end{enumerate}
\end{proof}

\begin{example} Suppose $m=6$. Then
\begin{align*}S_1 & =24, 1, 23, 2, 22, 3, 21, 4, 20, 5, 19, 6;\\
S_2 & =18, 7, 17, 8, 16, 9, 15, 10, 14, 11, 13, 12.
\end{align*}

If we choose $h=1$, $k=1$, $a_1=1$, $a_2=5$, $b_1=1$, $b_2=1$ and $b_3=4$, then we have
\begin{alignat*}{2}
T_1^1 & =24, 1; & T_2^1 & = 12, 13;\\
T_1^2 & = 6, 19, 5, 20, 4, 21, 3, 22, 2, 23; \quad & T_2^2 & = 14, 11;\\
& &T_2^3 & = 10, 15, 9, 16, 8, 17, 7, 18.
\end{alignat*}
This is a local antimagic $3$-coloring for $\th(2, 10, 2, 2, 8)=\th(2,2,2,8,10)$ with colors 24, 25, 66.

If we choose $h=1$, $k=3$, $a_1=3$, $a_2=3$, $b_1=1$, $b_2=1$, $b_3=2$, $b_4=1$ and $b_5=1$, then we have
\begin{alignat*}{2}
T_1^1 & =24, 1, 23, 2, 22, 3; & T_2^1 & = 12, 13;\\
T_1^2 & = 6, 19, 5, 20, 4, 21; \qquad & T_2^2 & = 14, 11;\\
&&T_2^3 & = 10, 15, 9, 16;\\
&&T_2^4 & = 17, 8;\\
&& T_2^5 & = 7, 18.
\end{alignat*}
This is a local antimagic $3$-coloring for $\th(6, 6, 2, 2, 4, 2, 2)=\th(2,2,2,2,4,6,6)$ with colors 24, 25, 90. \rsq
\end{example}

\subsection{From spider graphs}

A {\it spider graph} with $s\ge 2$ legs, denoted $Sp(a_1, a_2, \dots, a_s)$, is a tree formed by identifying an end-vertex, called the {\it core vertex}, of each path of length $a_i$ for each $1\le i\le s$, where $1\le a_1\le \cdots\le a_s$.
In~\cite{LauShiuSoo}, the authors obtained many sufficient conditions so that $\chi_{la}(Sp(a_1,a_2,\ldots,a_s))=s+1$. Motivated by this, we shall construct some $s$-bridge graphs from some $Sp(a_1,a_2,\ldots,a_s)$ so that their local antimagic chromatic number is 3.

\begin{theorem}[{\cite[Theorem~2.11]{LauShiuSoo}}]\label{thm-evenlegs}
Let $a_1, \ldots, a_s$ be positive even numbers, where $s\ge 2$. If $a_s = a_{s-2} + 2a_{s-3} + \cdots + (s-3)a_2 + (s-2)a_1$, then $\chi_{la}(Sp(a_1,\ldots,a_s)) = s+1$.
\end{theorem}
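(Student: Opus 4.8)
The plan is to prove the two bounds $\chi_{la}(Sp(a_1,\dots,a_s))\ge s+1$ and $\chi_{la}(Sp(a_1,\dots,a_s))\le s+1$ separately; the hypothesis on $a_s$ enters in both, but it does the real work in the construction for the upper bound. For the lower bound, observe first that because every $a_i\ge 2$ the only vertices of degree $1$ are the $s$ leaves $\ell_1,\dots,\ell_s$ (one at the end of each leg), and under any local antimagic labeling $f$ the colour $f^+(\ell_i)$ is simply the label of the unique edge at $\ell_i$; since $f$ is a bijection these $s$ colours are pairwise distinct. Suppose for contradiction that some $f$ induces only $s$ colours; then the colour set is exactly $\{f^+(\ell_1),\dots,f^+(\ell_s)\}$, so in particular $f^+(c)=\sum_{i}x_{i,1}$ must equal one of the leaf labels, where $c$ is the core and $x_{i,1},\dots,x_{i,a_i}$ are the labels along leg $i$ read from $c$. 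I would then track, leg by leg, that the degree-two colours $x_{i,j}+x_{i,j+1}$ all lie in this $s$-element set, with consecutive ones distinct, the last one different from $f^+(\ell_i)$ and the first one different from $f^+(c)$, and combine these membership constraints with the global identity $\sum_{v}f^+(v)=q(q+1)$, where $q=\sum_i a_i$, together with $a_1\le\cdots\le a_s$ and $a_s=a_{s-2}+2a_{s-3}+\cdots+(s-2)a_1$, to force an arithmetic contradiction. It is convenient to separate out the legs of length exactly $2$, which contribute a single degree-two vertex, from the longer legs, on which the membership and distinctness constraints pin the parity pattern of the $x_{i,j}$.

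For the upper bound I would exhibit an explicit local antimagic $(s+1)$-coloring. The target structure, which one sees is essentially forced, is that the core and every degree-two vertex carry colours from a two-element set per leg, namely a global value $\mu$ and a leg-dependent value $\lambda_i$ equal to the colour of some other leaf, with $f^+(c)=\mu$; the $s+1$ colours are then $\{\mu\}$ together with the $s$ leaf colours. Concretely, on leg $i$ one wants $x_{i,1}+x_{i,2}=\lambda_i$, $x_{i,2}+x_{i,3}=\mu$, $x_{i,3}+x_{i,4}=\lambda_i$, and so on, which forces the odd-indexed labels $x_{i,1},x_{i,3},\dots$ of leg $i$ to form an arithmetic progression and the even-indexed labels $x_{i,2},x_{i,4},\dots$ to form an arithmetic progression with the opposite common difference — exactly the $\diamond$-construction underlying Lemma~\ref{lem-diamond}. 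The remaining task is to choose the first terms and common differences leg by leg so that (i) the $q$ labels used are exactly $\{1,\dots,q\}$, (ii) $\sum_i x_{i,1}=\mu$, and (iii) the $s$ leaf colours are distinct and all different from $\mu$. This is where $a_s=a_{s-2}+2a_{s-3}+\cdots+(s-2)a_1$ is used: it is precisely the balance condition that lets one fill legs $1,\dots,s-1$ with progression blocks of the prescribed sizes and have the complementary set of labels assemble into one consistent alternating block of length $a_s$ on the longest leg, while keeping the core sum on target.

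I expect the genuine obstacle to be exactly items (i)--(iii) of the construction: verifying that the scheme of filling the shorter legs with arithmetic-progression blocks and placing the remaining labels on the longest leg really produces a bijection onto $\{1,\dots,q\}$, and that, with the magic condition in force, the core colour $\sum_i x_{i,1}$ comes out equal to $\mu$ and to none of the leaf colours. Once the labeling is written down, checking that it is local antimagic should be routine: adjacent degree-two vertices automatically receive different colours because the two interleaved progressions have non-zero common difference, and the core--leaf and internal--leaf adjacencies are handled by building $\lambda_i\neq f^+(\ell_i)$ and $\mu\neq f^+(\ell_i)$ into the choices. A secondary point to watch is the degenerate cases — $s=2$, several $a_i$ equal, or $a_1$ small — where some progression blocks shrink and must be inspected directly.
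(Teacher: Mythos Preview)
This theorem is not proved in the present paper: it is quoted verbatim from \cite{LauShiuSoo}, so there is no ``paper's own proof'' to compare against. What the paper \emph{does} reveal, in the paragraph just after the statement, is the shape of the construction in \cite{LauShiuSoo}: the labeling there gives the core vertex the induced colour $q$ (the size of the spider) and gives \emph{every} degree-two vertex colour $q$ or $q+1$. In particular the two ``internal'' colours are global, not leg-dependent, and the $(s+1)$-st colour arises because the edge labelled $q$ sits on a pendant, so one leaf colour coincides with the core colour. Your upper-bound scheme is structured differently: you aim for a global $\mu$ together with a leg-dependent $\lambda_i$ that you then force to coincide with some other leaf's colour. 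That is more delicate than necessary; if you instead target the pair $(q,q+1)$ for all internal sums and let the $\diamond$-blocks interleave as in Lemma~\ref{lem-diamond}, the balance condition $a_s=a_{s-2}+2a_{s-3}+\cdots+(s-2)a_1$ falls out as exactly the requirement that the leftover labels form a single alternating block of length $a_s$, and items (i)--(iii) become bookkeeping rather than an obstacle.

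Your lower-bound plan has a genuine misstep: you assert that the hypothesis on $a_s$ ``enters in both'' bounds and sketch an argument using the global identity $\sum_v f^+(v)=q(q+1)$ together with that hypothesis. In fact the lower bound $\chi_{la}(Sp(a_1,\dots,a_s))\ge s+1$ holds for \emph{every} spider with $s$ legs of length at least $2$, with no condition on the $a_i$, by the standard one-line argument: the $s$ leaf colours are $s$ distinct edge labels, and the neighbour of the leaf carrying the largest of these labels has induced colour strictly larger than every leaf colour, hence is an $(s+1)$-st colour. The elaborate tracking of degree-two colours and the arithmetic contradiction you propose are unnecessary, and invoking the magic condition there would be misleading.
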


When $s=2$, the spider is a path. It is well-known that the local antimagic chromatic number of a path of length at least 2 is 3. Actually, the condition of Theorem~\ref{thm-evenlegs} does not hold when $s=2$.
So we only consider $s\ge 3$. From the proof of \cite[Theorem~2.11]{LauShiuSoo},
the core vertex has the induced color $q$, all vertices of degree $2$ are labelled by $q$ or $q + 1$, where $q$ is the size of the spider graph.
Now we merge all pendant vertices.
Then we get an $s$-bridge graph and a local antimagic 3-coloring for the resulting graph.
Therefore, we obtain the following result.

\begin{corollary}\label{cor-evenlegs}
Let $a_1, \ldots, a_s$ be positive even numbers, where $s\ge 3$.
If $a_s = a_{s-2} + 2a_{s-3} + \cdots + (s-3)a_2 + (s-2)a_1$,
then $\chi_{la}(\th(a_1,\ldots,a_s)) = 3$.
\end{corollary}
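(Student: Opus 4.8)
The plan is to realize $\th(a_1,\dots,a_s)$ as a quotient of the spider $Sp(a_1,\dots,a_s)$ and to transport the local antimagic labeling supplied by Theorem~\ref{thm-evenlegs}. Let $q=\sum_{i=1}^s a_i$ be the common size of both graphs, and let $f$ be the labeling of $Sp(a_1,\dots,a_s)$ built in the proof of \cite[Theorem~2.11]{LauShiuSoo}, for which the core vertex has induced color $q$ and every degree-$2$ vertex has induced color $q$ or $q+1$. Since each $a_i$ is even, hence $\ge 2$, every leg contains at least one internal vertex, so the $s$ leaves are distinct from one another and from the core. Identify all $s$ leaves into a single vertex $v$ and relabel the core as $u$. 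The edge set is untouched, so $f$ is still a bijection onto $[1,q]$; and since all leg lengths are $\ge 2$, no multi-edge is created, so the quotient is precisely $\th(a_1,\dots,a_s)$, with the $s$ legs now serving as the $s$ internally disjoint $u$--$v$ paths.

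Next I would verify that $f$ is a local antimagic $3$-coloring of $\th(a_1,\dots,a_s)$. The induced color of $u$ is still $q$ and the induced colors of all internal vertices are unchanged, hence lie in $\{q,q+1\}$; every adjacency not involving $v$ is inherited from the spider, so none of these conflicts. It remains to handle $v$, whose color is $f^+(v)=\sum_{i=1}^s \ell_i$, where $\ell_i$ denotes the label of the edge of leg $i$ incident to $v$ (the last edge of that leg). I would compute this sum globally: applying $\sum_w f^+(w)=2\sum_e f(e)=q(q+1)$ to the spider and writing $p$ for the number of internal vertices colored $q+1$ (so $0\le p\le q-s$, the total number of internal vertices), the left-hand side equals $q+\bigl(q(q-s)+p\bigr)+\sum_{i}\ell_i$, whence $f^+(v)=qs-p\ge qs-(q-s)=q(s-1)+s\ge 2q+3>q+1$. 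Thus $f^+(v)\notin\{q,q+1\}$, so $v$ conflicts with none of its neighbors (all internal vertices), and $f$ is a proper vertex coloring using exactly the three colors $q$, $q+1$, $f^+(v)$. Hence $\chi_{la}(\th(a_1,\dots,a_s))\le 3$.

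For the reverse inequality I would show that $\th(a_1,\dots,a_s)$ is not among the graphs with $\chi_{la}=2$ characterized in Theorem~\ref{thm-chilath=2}. When $s=3$ the hypothesis forces $a_1=a_2=a_3$, so the graph has exactly three internal paths, whereas every graph listed in Theorem~\ref{thm-chilath=2}, as well as $K_{2,s}$ with even $s\ge 4$, has at least four internal paths; hence $\chi_{la}\ge 3$. When $s\ge 4$ the hypothesis writes $a_s$ as a sum of the remaining lengths with strictly increasing positive weights $1,2,\dots,s-2$, so $a_s$ far exceeds $a_1$, which is incompatible with every item of Theorem~\ref{thm-chilath=2} (in each of them all path lengths stay within a bounded ratio), and the graph is not $K_{2,s}$ since that would require all $a_i=2$, forcing $(s-1)(s-2)=2$. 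In all cases $\chi_{la}(\th(a_1,\dots,a_s))\ge 3$, and equality follows.

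The crux of the argument is the middle step, pinning down $f^+(v)$, and the handshake identity is what makes it painless: it bypasses the internal combinatorics of the labeling of \cite[Theorem~2.11]{LauShiuSoo} and uses only the two stated color facts (core colored $q$, degree-$2$ vertices colored $q$ or $q+1$). The lower-bound step is routine but does demand a little care in checking that no exceptional family of Theorem~\ref{thm-chilath=2} can satisfy the spider-type equation among its path lengths.
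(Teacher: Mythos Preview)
Your upper-bound argument is correct and in fact more careful than the paper's. The paper simply asserts, in the paragraph preceding the corollary, that merging the leaves of the spider yields a local antimagic $3$-coloring of the bridge graph; it does not verify that the new vertex $v$ receives a color distinct from $q$ and $q+1$. Your handshake computation $f^+(v)=sq-p\ge q(s-1)+s>q+1$ fills that gap cleanly, using only the two stated facts about the spider labeling.

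The lower-bound step, however, has a genuine gap. The ``bounded ratio'' heuristic for $s\ge 4$ gives at best $a_s\ge\binom{s-1}{2}a_1$, which for $s=4$ is merely $a_4\ge 3a_1$, and this is \emph{not} incompatible with the families in Theorem~\ref{thm-chilath=2}. Concretely, item~(3b) with $l=t=2$ yields $\th(4l-2-2t,\,2t-2,\,(4l-4)^{[l-1]},\,(4l-2)^{[l-1]})=\th(2,2,4,6)$, and here $a_4=6=2+2\cdot 2=a_2+2a_1$, so the hypothesis of the corollary is satisfied exactly---yet Theorem~\ref{thm-chilath=2} asserts $\chi_{la}(\th(2,2,4,6))=2$. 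The paper's own proof does proceed family by family (which is the right strategy in principle), but it mishandles this very case: it miscomputes the $l=2$ instance of~(3b) as ``$\th(2,2,20,22)$'' and then asserts ``clearly $a_4\ne a_2+2a_1$''. So the difficulty is not merely that your argument is too sketchy; $\th(2,2,4,6)$ appears to be a genuine exception to the corollary as stated, and any valid proof of the lower bound must either exclude it explicitly from the hypothesis or explain why it is not in fact covered by Theorem~\ref{thm-chilath=2}.
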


\begin{proof}
From the above discussion,
$\chi_{la}(\th(a_1,\ldots,a_s)) \le 3$.
It suffices to check that all graphs listed in Theorem~\ref{thm-chilath=2} do not satisfy the condition of Corollary \ref{cor-evenlegs}.

From the condition of Corollary \ref{cor-evenlegs}, we have that when $s\ge 4$,
\begin{equation*}\label{eq-condition}
a_s\ge (s-4)a_3+(s-3)a_2+(s-2)a_1.\tag{*}
\end{equation*}
Also all graphs listed in Theorem~\ref{thm-chilath=2} are $s$-bridge graphs with $s\ge 4$.
Next we show that all graphs listed in Theorem~\ref{thm-chilath=2} do not satisfy \eqref{eq-condition} respectively.
\begin{enumerate}[1a.]
\item[(1)] $\th(4l^{[3l+2]}, (4l+2)^{[l]})$, where $l\ge 1$.

Since $s=4l+2$, $a_1=4l$ and $a_{s}=4l+2$, it is clear that it does not satisfy \eqref{eq-condition}.
\item[(2a)] $\th(2l-2, (4l-2)^{[3l-1]})$, where $l\ge 2$.

Since $s=3l$, $a_1=2l-2$ and $a_{s}=4l-2$, it is clear that it does not satisfy \eqref{eq-condition}.
\item[(2b)] $\th(2, 4^{[3]}, 6)$; $\th(4, 8^{[5]}, 10^{[2]})$; $\th(6, 12^{[7]}, 14^{[3]})$.

Clearly, each graph does not satisfy \eqref{eq-condition}.
\item[(3a)] $\th(4l-2-2t, 2t, (4l-4)^{[l]}, (4l-2)^{[l-2]})$, where $2\le l\le t\le \frac{5l-2}{4}$.

Here $4\le 2l\le 2t \le \frac{5l}{2}-1$.
Thus $s=2l$, $a_s=4l-2$, $\{a_1, a_2\}=\{4l-2-2t, 2t\}$. Then,
\[
(s-3)a_2+(s-2)a_1>(s-3)(a_1+a_2)=(2l-3)(4l-2)\ge 4l-2,
\]
implying that it does not satisfy \eqref{eq-condition}.
\item[(3b)] $\th(4l-2-2t, 2t-2, (4l-4)^{[l-1]}, (4l-2)^{[l-1]})$, where $2\le l\le t\le \frac{5l}{4}$.

Similar to $(3a)$, $s=2l$, $a_s=4l-2$, $\{a_1, a_2\}=\{4l-2-2t, 2t-2\}$.
When $l\ge 3$,
\[
(s-3)a_2+(s-2)a_1>(s-3)(a_1+a_2)=(2l-3)(4l-4)\ge 4l-2.
\]
When $l=2$, $\th(4l-2-2t, 2t-2, (4l-4)^{[l-1]}, (4l-2)^{[l-1]})=\th(2,2,20,22)$.

Clearly, $a_4\ne a_2+2a_1$.
\item[(4)] $\th(2t, 4s-6-2t, 2s-4, (4s-6)^{[s-3]})$, where $\frac{2s-3}{8}\le t\le \frac{6s-5}{8}$ and $s\ge 4$.

Obviously, $\{a_1, a_2, a_3\}=\{2t, 4s-6-2t, 2s-4\}$ and $s_s=4s-6$.
When $s\ge 5$,
\[
(s-4)a_3+(s-3)a_2+(s-2)a_1>a_1+a_2+a_3=6s-10>4s-6.
\]
When $s=4$, $\th(2t, 4s-6-2t, 2s-4, (4s-6)^{[s-3]})=\th(2t, 4, 10-2t, 10)$, where $t\in \{1,2\}$.
Thus $(s-4)a_3+(s-3)a_2+(s-2)a_1=a_2+2a_1=4+4t\ne 10=a_4$.
\end{enumerate}
This completes the proof.
\end{proof}

\subsection{From one-point unions of cycles}

A {\it one-point union} of $r\ge 2$ cycles, denoted $C(n_1,n_2,\ldots,n_r)$, is a graph obtained by identifying a vertex of each cycle of order $n_1,n_2,\ldots,n_r\ge 3$.
The vertex of degree $2r$ is called the {\it core vertex} and its incident edges is called the {\it central edges}.
In~\cite[Theorem 2.4]{LSN-IJMSI}, the authors completely determined the local antimagic chromatic number of $C(n_1,n_2,\ldots,n_r)$.
 Motivated by this, we shall construct some $s$-bridge graphs from some $C(n_1,n_2,\ldots,n_r)$ so that their local antimagic chromatic number are $3$.

Note that if we choose a vertex of degree $2$ in each cycle of $C(n_1,n_2,\ldots,n_r)$ and merge these $r$ vertices into a vertex of degree $2r$ such that the graph is still simple,
then we get a $2r$-bridge graph.
For example, from $C(4,4)$, we can get $\theta(1,2,2,3)$ and $\th(2,2,2,2)$.

\begin{lemma}[{\cite[Lemma 2.3]{LSN-IJMSI}}]\label{lem-2part} Let $G$ be a graph of size $q$. Suppose there is a local antimagic labeling of $G$ inducing a $2$-coloring of $G$ with colors $x$ and $y$, where $x<y$. Let $X$ and $Y$ be the sets of vertices colored $x$ and $y$, respectively. Then $G$ is a bipartite graph with bipartition $(X,Y)$ and $|X|>|Y|$. Moreover,
$x|X|=y|Y|= \frac{q(q+1)}{2}$.
\end{lemma}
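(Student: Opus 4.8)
The plan is to establish the three conclusions by elementary counting, using only the fact — recorded in the introduction — that the vertex coloring $f^+$ induced by a local antimagic labeling $f$ is a \emph{proper} coloring, i.e. adjacent vertices receive distinct colors. First I would argue bipartiteness. Since by hypothesis only the two colors $x$ and $y$ occur, propriety forbids any edge from having both endpoints in $X$ or both in $Y$; hence every edge of $G$ joins a vertex of $X$ to a vertex of $Y$. Therefore $(X,Y)$ is a bipartition of $V(G)$ and $G$ is bipartite. Both $X$ and $Y$ are nonempty, since a genuine $2$-coloring uses both colors.

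Next I would exploit this bipartition to evaluate $x|X|$. Because $(X,Y)$ is a bipartition, each edge $e\in E(G)$ has exactly one endpoint in $X$, so summing the induced colors over $X$ counts each label $f(e)$ exactly once:
\[
\sum_{v\in X} f^+(v) \;=\; \sum_{e\in E(G)} f(e) \;=\; 1+2+\cdots+q \;=\; \frac{q(q+1)}{2},
\]
using that $f$ is a bijection onto $\{1,\dots,q\}$. On the other hand, every vertex of $X$ carries the color $x$, so the left-hand side is $x|X|$; thus $x|X|=\tfrac{q(q+1)}{2}$. The same argument with the roles of $X$ and $Y$ interchanged gives $y|Y|=\tfrac{q(q+1)}{2}$.

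Finally, from $x|X|=y|Y|$ together with $x<y$ and $|X|,|Y|\ge 1$ I would conclude $|X|=\tfrac{y}{x}\,|Y|>|Y|$, which is the remaining assertion.

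There is no genuinely hard step here: the proof is a single double count combined with propriety of the induced coloring. The only points warranting a word of care are that both color classes are nonempty (immediate from the meaning of a $2$-coloring) and that the sum over $X$ is unambiguous — which is guaranteed by the standing assumption in the paper that $G$ is connected with $|V(G)|\ge 3$, so that $G$ has no isolated vertex and $f^+(v)>0$ for every $v$.
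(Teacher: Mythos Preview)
Your argument is correct and is exactly the standard double-counting proof of this lemma. Note, however, that the present paper does not actually supply its own proof of Lemma~\ref{lem-2part}: the result is quoted verbatim from \cite[Lemma~2.3]{LSN-IJMSI}, so there is no in-paper proof to compare against. Your write-up matches what one finds in the cited source.
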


\begin{theorem}[{\cite[Theorem 2.4]{LSN-IJMSI}}]\label{thm-OUC}
For $G=C(n_1,n_2,\ldots,n_r)$, $\chi_{la}(G)=2$ if and only if $G=C((4r-2)^{[r-1]}, 2r-2)$, where $r\ge 3$,
or $G=C((2r)^{[(r-1)/2]}, (2r-2)^{[(r+1)/2]})$, where $r$ is odd.
Otherwise, $\chi_{la}(G)=3$.
\end{theorem}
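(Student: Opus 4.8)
The plan is to prove $\chi_{la}(G)\le 3$ for every one-point union of $r\ge 2$ cycles and then to pin down exactly when equality with $2$ holds; since $\chi_{la}\ge 2$ always, this suffices. For the upper bound I would write down an explicit local antimagic $3$-labelling. Suppose first that all $n_i$ are even, write $n_i=2m_i$, $q=2\sum_i m_i$, and $p_i=m_1+\cdots+m_i$ (with $p_0=0$). On the $i$-th cycle, reading the edges out from the core vertex $c$, use the zig--zag sequence
\[
p_i,\; q+1-p_i,\; p_i-1,\; q+2-p_i,\; p_i-2,\; q+3-p_i,\;\dots,
\]
so that the two edge labels at each internal vertex sum alternately to $q+1$ and $q$. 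Then cycle $i$ consumes the labels $[p_{i-1}+1,\,p_i]\cup[q+1-p_i,\,q-p_{i-1}]$, the cycles together use $[1,q]$, every internal vertex gets colour $q$ or $q+1$ (adjacent ones differing), and $f^+(c)=\sum_i(p_i+q-p_{i-1})=rq+\tfrac q2>q+1$, a third colour conflicting with nothing. When some $n_i$ are odd, one uses the length-$(m{+}1)/$length-$m$ diamond variants $I^*_m,D^*_m$ of Lemma~\ref{lem-diamond-1}, pairing the odd cycles exactly as in the proof of Theorem~\ref{thm-even+oddlength}; this is a routine case split.

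For the ``if'' part of the characterisation I would construct the $2$-colourings directly. For $G=C((4r-2)^{[r-1]},2r-2)$ with $r\ge 3$ one has $q=4r(r-1)$; take $x=q+1$, $d=2r-1$, $y=x+d$. For $G=C((2r)^{[(r-1)/2]},(2r-2)^{[(r+1)/2]})$ with $r$ odd one has $q=(r-1)(2r+1)$; take $x=q+1$, $d=2r$, $y=x+d$. On the $i$-th cycle (length $2m_i$) put on the odd-position edges an arithmetic progression of length $m_i$ and common difference $d$, beginning at a value $e^{(i)}$, and on the even-position edges the progression beginning at $q+1-e^{(i)}$ with common difference $-d$; then every internal vertex gets colour $x$ or $y$ alternately. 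What remains is to choose the $e^{(i)}$ so that these progressions together with their reflections $a\mapsto q+1-a$ partition $[1,q]$: one packs residue classes modulo $d$ greedily, and for these parameter values the residue classes split perfectly (for $C(6,4,4)$, say, one gets the colouring with colours $15,21$), while a short check gives $f^+(c)=y$. Hence $\chi_{la}(G)=2$ there.

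For the ``only if'' part --- the crux --- suppose $\chi_{la}(G)=2$. By Lemma~\ref{lem-2part}, $G$ is bipartite, so every $n_i=2m_i$ is even, and the colour classes $X,Y$ satisfy $|X|>|Y|$ and $x|X|=y|Y|=q(q+1)/2$. Counting vertices by parity of distance from $c$ forces $c\in Y$, $|Y|=1+\tfrac q2-r$, $|X|=\tfrac q2$, hence $x=q+1$ and $y=\dfrac{q(q+1)}{q+2-2r}$. Chasing the forced colour pattern $y(c),x,y,x,\dots$ around each cycle shows that the edge labels of cycle $i$ are an arithmetic progression of length $m_i$ with common difference $d:=y-x$ together with its reflection under $a\mapsto q+1-a$; thus $[1,q]$ decomposes into $r$ such progressions (plus reflections) of prescribed lengths $m_1,\dots,m_r$. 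Integrality of $y$ gives $k:=\tfrac q2-r+1\mid (r-1)(2r-1)$, and since each $m_i\ge2$ we get $k\ge r+1$; the existence of an internal vertex coloured $y$ gives $y\le 2q-1$; and the requirement that a length-$m_i$, step-$d$ progression fit inside $[1,q]$ forces $d=(q+1)(r-1)/k$ to be small relative to $q$. Running through the divisors $k\ge r+1$ of $(r-1)(2r-1)$ and discarding those for which no admissible multiset $\{m_i\}$ exists leaves only $k=(r-1)(2r-1)$ and $k=(r-1)(2r-1)/2$ (the latter needing $r$ odd), and in each surviving case $\{m_i\}$ is forced to be exactly that of the corresponding family.

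I expect the last step to be the main obstacle: turning ``each cycle's labels form a step-$d$ progression plus its reflection, and these tile $[1,q]$'' into a finite, checkable condition. The divisibility $k\mid(r-1)(2r-1)$ by itself admits further divisors $k\ge r+1$, and ruling them all out needs a residue-class count --- a step-$d$ progression of length $\ell$ can only sit in a residue class modulo $d$ containing at least $\ell$ elements of $[1,q]$, and the reflection $a\mapsto q+1-a$ pairs up these residue classes, so the number of long progressions one can accommodate is sharply limited, falling short of what $\{m_i\}$ demands for every $k$ outside the two special values. The upper-bound construction and the two explicit $2$-colourings, by contrast, are routine once the right sequences are written down.
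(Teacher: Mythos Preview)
This theorem is not proved in the present paper: it is quoted verbatim from \cite{LSN-IJMSI} (their Theorem~2.4), and the only content supplied here is the Remark immediately after, which spells out the two explicit local antimagic $2$-colourings for $C((4r-2)^{[r-1]},2r-2)$ and $C((2r)^{[(r-1)/2]},(2r-2)^{[(r+1)/2]})$. So there is no in-paper proof to compare your ``only if'' direction or your general upper bound against; your explicit $2$-colourings do match the paper's Remark in spirit.

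On your proposal itself, two gaps are worth naming. First, the upper bound $\chi_{la}\le 3$ when some $n_i$ are odd is not handled: you appeal to the pairing trick from Theorem~\ref{thm-even+oddlength}, but that argument requires the odd-length pieces to come in \emph{equal-length pairs}, whereas $C(n_1,\dots,n_r)$ imposes no such constraint on its odd cycles (their number need not even be even). A genuinely different construction is needed there. Second, in the ``only if'' direction your reduction to $k:=\tfrac q2-r+1\mid(r-1)(2r-1)$ with $k\ge r+1$ is correct and the structural claim that the odd-position labels on each cycle form a step-$d$ arithmetic progression is sound, but the divisor-elimination step is only gestured at. The number $(r-1)(2r-1)$ can admit several divisors $\ge r+1$, and your residue-class idea (progressions of length $m_i$ must sit inside residue classes modulo $d$ of adequate size, with the reflection $a\mapsto q+1-a$ pairing those classes) is the right tool but has to be carried out in full, including the analysis of the self-paired residue class that forces the single short cycle, before one can conclude that only $k=(r-1)(2r-1)$ and $k=(r-1)(2r-1)/2$ survive and that the multiset $\{m_i\}$ is then uniquely determined.
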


\begin{rem} For completeness, we shall give the local antimagic 2-coloring $f$ for the graphs $G=C((4r-2)^{[r-1]}, 2r-2)$, where $r\ge 3$,
and $G=C((2r)^{[(r-1)/2]}, (2r-2)^{[(r+1)/2]})$, where $r$ is odd.
For $1\le i\le r$, the edges of the $i$-th cycle, $C_{n_i}$, has consecutive edges $e_{i,1}, e_{i,2}, \ldots, e_{i,n_i}$ with the central edges $e_{i,1}$ and $e_{i,n_i}$ incident to the core vertex $u$.

\ms\nt {\bf (A).} Consider $G=C((4r-2)^{[r-1]}, 2r-2)$ for $r\ge 3$.

For the $i$-th $C_{4r-2}$, we label the edges $e_{i, 2j-1}$ by $i+(2r-1)(j-1)$ and  $e_{i, 2j}$ by $4r^2-2r-i-(2r-1)j$, where $1\le j\le 2r-1$.
For $C_{2r-2}$, we label the edges $e_{r, 2j-1}$ by $(2r-1)j$ and  $e_{r, 2j}$ by $4r^2-4r+1-(2r-1)j$ for $1\le j\le r-1$.
Then the induced vertex labels are $y=4r^2-2r$ and $x=4r^2-4r+1$ alternately beginning at vertex $u$.

\ms\nt {\bf (B).} Consider  $G=C((2r)^{[(r-1)/2]}, (2r-2)^{[(r+1)/2]})$ for $r$ is odd.

For the $i$-th $C_{2r}$, where $1\le i\le (r-1)/2$, we label the edges $e_{i, 2j-1}$ by $i+2r(j-1)$ and  $e_{i, 2j}$ by $2r^2-r-i-2r(j-1)$ for $1\le j\le r$.
For the $k$-th $C_{2r-2}$, where $1\le k\le (r+1)/2$, we label the edges $e_{{k},2j-1}$ by $-r+k-1+2rj$ and $e_{{k},2j}$ by $2r^2-k+1-2rj$, $1\le j\le r-1$.
Then the induced vertex labels are $y=2r^2+r$ and $x=2r^2-r$ alternately beginning at vertex $u$. \rsq
\end{rem}

\begin{example}
For $C(10, 10, 4)$, beginning and ending with central edges, the two 10-cycles has consecutive labels 1, 24, 6, 19, 11, 14, 16, 9, 21, 4 and 2, 23, 7, 18, 12, 13, 17, 8, 22, 3 respectively while the 4-cycle has consecutive labels 5, 20, 10, 15 with $y=30$ and $x=25$.

For $C(10, 10, 8, 8, 8)$, the two 10-cycles has consecutive labels 1, 44, 11, 34, 21, 24, 31, 14, 41, 4 and 2, 43, 12, 33, 22, 23, 32, 13, 42, 3 respectively, while the three 8-cycles has consecutive labels 5, 40, 15, 30, 25, 20, 35, 10; 6, 39, 16, 29, 26, 19, 36, 9 and 7, 38, 17, 28, 27, 18, 37, 8 respectively, with $y=55$ and $x=45$. \rsq
\end{example}

Let $G=C((4r-2)^{[r-1]}, 2r-2)$, where $r\ge 3$, or $G=C((2r)^{[(r-1)/2]}, (2r-2)^{[(r+1)/2]})$, where $r$ is odd.
Let the core vertex $u$ of $G$ and all other vertices of even distant from $u$ be black vertices,
and the remaining vertices be white vertices.
For $1\le i\le r$, choose a white vertex of the $i$-th cycle of $G$, say $v_i$, such that at most one of them is adjacent to $u$ in $G$.
Identify vertices $v_1$ to $v_r$ to get a new vertex $v$.

Observe that each obtained graph is
\begin{enumerate}[(i)]
\item a $2r$-bridge graph with $u$ and $v$ the only two vertices of degree $2r$; and
\item bipartite with both partite sets of same size with $\chi_{la}\ge 3$ (by Lemma~\ref{lem-2part}).
\end{enumerate}

Note that the new graph is either $\th(2h_1-1, 4r-2h_1-1, \dots, 2h_{r-1}-1, 4r-2h_{r-1}-1, 2h_r-1, 2r-2h_r-1)$, where at most one of $2h_i-1$, $4r-2h_i-1$, $2h_r-1$, $2r-2h-1$ is $1$ and  $1\le i\le r-1$, or
$\th(2h_1-1, 2r-2h_1+1, \dots, 2h_{(r-1)/2}-1, 2r-2h_{(r-1)/2}+1, 2k_1-1, 2r-2k_1-1, \dots, 2k_{(r+1)/2}-1, 2r-2k_{(r+1)/2}-1)$, where $r$ is odd and at most one of $2h_i-1$, $2r-2h_i+1$, $2k_j-1$, $2r-2k_j-1$ is $1$, $1\le i\le (r-1)/2$ and $1\le j\le (r+1)/2$ . Clearly, these graphs are not in the list of Theorem~\ref{thm-chilath=2}.

\begin{theorem}
For simple graph $\th=\th(2h_1-1, 4r-2h_1-1, \dots, 2h_{r-1}-1, 4r-2h_{r-1}-1, 2h_r-1, 2r-2h_r-1)$ or $\th=\th(2h_1-1, 2r-2h_1+1, \dots, 2h_{(r-1)/2}-1, 2r-2h_{(r-1)/2}+1, 2k_1-1, 2r-2k_1-1, \dots, 2k_{(r+1)/2}-1, 2r-2k_{(r+1)/2}-1)$, $\chi_{la}(\th)=3$.
\end{theorem}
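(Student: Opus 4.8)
The plan is to exhibit every such $\th$ as a vertex identification of a one-point union of cycles that already carries a local antimagic $2$-coloring, and then to show that this identification turns those two colors into exactly three. Concretely, I would start from $G=C((4r-2)^{[r-1]}, 2r-2)$, or, when $r$ is odd, from $G=C((2r)^{[(r-1)/2]},(2r-2)^{[(r+1)/2]})$; by Theorem~\ref{thm-OUC} each such $G$ has $\chi_{la}(G)=2$, realised by the explicit labeling $f$ displayed in the remark above. For that $f$ the induced $2$-coloring is precisely the black/white bipartition $(X,Y)$: every white vertex gets the smaller color $x$, every black vertex (in particular the core vertex $u$) gets $y$, where $(x,y)=(4r^2-4r+1,\,4r^2-2r)$ in the first case and $(x,y)=(2r^2-r,\,2r^2+r)$ in the second. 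Now choose in the $i$-th cycle a white vertex $v_i$ so that at most one of $v_1,\dots,v_r$ is adjacent to $u$, and identify $v_1,\dots,v_r$ into a single vertex $v$. Cutting the $i$-th cycle at $u$ and $v_i$ splits it into two $(u,v)$-paths of odd lengths $2h_i-1$ and $n_i-2h_i+1$ determined only by the position of $v_i$, so the result is exactly a $2r$-bridge graph of the form in the statement; the adjacency restriction is what keeps it simple, since two $v_i$'s adjacent to $u$ would give a double edge $uv$. Conversely, every graph listed in the theorem (with at most one path of length $1$) arises this way by the obvious choice of the $v_i$.

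For the lower bound, the bipartition of $G$ descends to a bipartition $(X',Y')$ of $\th$, because $v_1,\dots,v_r$ all lie in $X$ and are pairwise non-adjacent; identifying them decreases $|X|$ by $r-1$ and leaves $|Y|$ unchanged. Since $\chi_{la}(G)=2$ forces $|X|=|Y|+(r-1)$ by Lemma~\ref{lem-2part}, we get $|X'|=|Y'|$. A bipartite graph with balanced parts admits no local antimagic $2$-coloring (again Lemma~\ref{lem-2part}), so $\chi_{la}(\th)\ge 3$; equivalently one checks that $\th$ is not on the list of Theorem~\ref{thm-chilath=2}.

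For the upper bound, keep the labeling $f$ on $\th$; since $E(\th)=E(G)$ it is still a bijection onto $\{1,\dots,q\}$. Every degree-$2$ vertex keeps its old color ($x$ or $y$) and $f^+(u)=y$ is unchanged, while the single new vertex $v$ has degree $2r$ and, each $v_i$ having been white, $f^+(v)=rx$ (the $2r$ edges at $v$ are distinct because $\th$ is simple). Every neighbour of $v$ is a $G$-neighbour of some $v_i$, hence black, hence colored $y$; and $rx>y$ for $r\ge 3$ by an easy inequality, so $f^+(v)\ne y$ and $f$ is still a local antimagic labeling of $\th$. Its color set is $\{x,y,rx\}$: the color $y$ occurs at $u$, the color $rx$ at $v$, each cycle (of even length $\ge 4$) retains a white degree-$2$ vertex so $x$ still occurs, and $rx\notin\{x,y\}$. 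Hence exactly three colors are used, giving $\chi_{la}(\th)\le 3$, and with the lower bound $\chi_{la}(\th)=3$.

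Since the construction does the heavy lifting, the genuine work is routine bookkeeping: matching the path-length parameters $2h_i-1,\ 4r-2h_i-1$ (resp.\ the $2r-2h_i+1$ and $k_j$-terms) to the two arcs of each cycle, verifying simplicity under the "at most one path of length $1$" hypothesis, and checking the inequalities $rx\ne x,y$ together with the survival of a white vertex in each cycle in the small cases. The one point requiring conceptual care is the partite-set count needed to invoke Lemma~\ref{lem-2part} for the lower bound; everything else is mechanical.
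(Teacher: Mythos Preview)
Your proposal is correct and follows essentially the same approach as the paper: construct $\th$ by identifying $r$ white vertices of the one-point union $G$, carry over the local antimagic $2$-coloring of $G$ so that $v$ receives the new color $rx$, and invoke Lemma~\ref{lem-2part} (balanced bipartition) for the lower bound. You spell out more details than the paper does---in particular the partite-set count $|X|=|Y|+(r-1)$ and the survival of the color $x$---and your only slight imprecision is attributing $|X|-|Y|=r-1$ to Lemma~\ref{lem-2part} alone (it also uses the vertex count of $G$, or can simply be read off from the cycle structure), but the argument is sound.
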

\begin{proof} Consider graphs obtained by the transformation given above. If $G=C((4r-2)^{[r-1]}, 2r-2)$, $r\ge 3$, we immediately have a bipartite $2r$-bridge graph that induces a local antimagic 3-coloring with vertex labels  $y=4r^2-2r$ and $x=4r^2-4r+1$ alternately beginning at vertex $u$, whereas the vertex $v$ has label $rx = r(4r^2-4r+1)$.

Similarly, if  $G=C((2r)^{[(r-1)/2]}, (2r-2)^{[(r+1)/2]})$, $r$ is odd, we immediately have a bipartite $2r$-bridge graph that induces a local antimagic 3-coloring with vertex labels  $y=2r^2+r$ and $x=2r^2-r$ alternately beginning at vertex $u$ whereas the vertex $v$ has label $rx = r^2(2r-1)$. Thus, this labeling is a local antimagic $3$-coloring.
\end{proof}

\begin{example} Suppose $G=C(10,10,4)$. Let $d(u,v_i)=d_i$, where $i\in \{1,2,3\}$,
the distance between $u$ and $v_i$. We have  $(d_1,d_2,d_3) \in \{(3,3,1), (3,5,1), (5,5,1)\}$. So the bipartite 6-bridge graphs are $\th(1,3,3,3,7,7)$, $\th(1,3,3,5,5,7)$ and $\th(1,3,5,5,5,5)$, respectively. A local antimagic 3-coloring for $\th(1,3,3,5,5,7)$ with colors 25, 30, 75 is given in Figure \ref{Fig1}.
%\centerline{\epsfig{file=th133557.eps, width=3cm}}
\end{example}
\vspace*{-0.5cm}
\begin{figure}[h]
\centering
\includegraphics[scale=0.5]{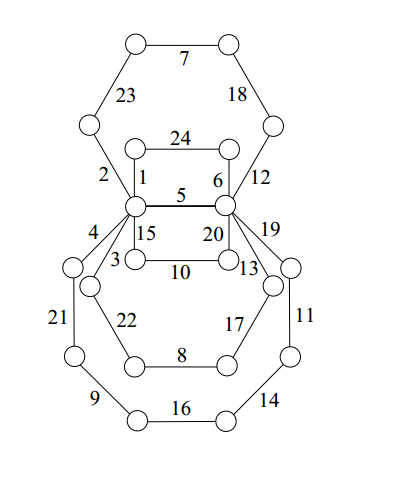}
\vspace*{-1cm}
\caption{$\th(1,3,3,5,5,7)$}\label{Fig1}
\end{figure}

Note that in transforming a $C(n_1,n_2,\ldots,n_r)$ to $2r$-bridge graphs, if at least a black vertex and at least a white vertex is chosen, the $2r$-bridge graphs must be tripartite (in this paper, tripartite graph means the chromatic number of the graph is 3).  Let the number of chosen black vertices be $k$, $1\le k\le r-1$.

\begin{theorem}   For even $s\ge 4$, there are infinitely many tripartite $s$-bridge graphs $\th$ with $\chi_{la}(\th) = 3$.  \end{theorem}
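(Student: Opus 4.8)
The plan is to prove, for each fixed even $s=2r\ge 4$, the existence of one explicit infinite family of $s$-bridge graphs, reading off $\chi_{la}=3$ from results already established rather than from the one-point-union construction of this subsection. (For a fixed $s$ that construction yields only finitely many bridge graphs, since the cycle lengths of $C((4r-2)^{[r-1]},2r-2)$ and of $C((2r)^{[(r-1)/2]},(2r-2)^{[(r+1)/2]})$ are determined by $r$ alone, and then only finitely many choices of the merged vertices are possible.) Concretely, for each odd integer $m\ge 3$ I would take
\[
\th_m \;=\; \th\bigl(2^{[2r-2]},\,m^{[2]}\bigr),
\]
the $2r$-bridge graph having $2r-2$ internal $(u,v)$-paths of length $2$ and two of length $m$. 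Every path has length at least $2$, so $\th_m$ is simple, and $\th_m$ has the form $\th(m_1^{[2]},\dots,m_r^{[2]})$ with $2\le m_1\le\cdots\le m_r$ (take $m_1=\cdots=m_{r-1}=2$ and $m_r=m$).

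Next I would determine $\chi_{la}(\th_m)$. Since $\th_m$ contains both a $(u,v)$-path of even length $2$ (this uses $r\ge 2$, i.e. the hypothesis $s\ge 4$) and $(u,v)$-paths of odd length $m$, it is not bipartite; hence $\chi_{la}(\th_m)\ge 3$, because by Lemma~\ref{lem-2part} a value $\chi_{la}(\th_m)=2$ would force $\th_m$ to be bipartite. On the other hand, $\th_m$ is exactly of the shape to which Theorem~\ref{thm-even+oddlength} applies, so $\chi_{la}(\th_m)\le 3$. Therefore $\chi_{la}(\th_m)=3$. (Alternatively: every graph occurring in the characterization of Theorem~\ref{thm-chilath=2}, including $K_{2,s}$, uses only even path lengths, hence $\th_m$ is not one of them, and Theorem~\ref{th3.5} gives $\chi_{la}(\th_m)=3$ in one step.)

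I would then check that $\th_m$ is tripartite, i.e. $\chi(\th_m)=3$. A bridge graph is a subdivision of the multigraph on two vertices joined by $s$ parallel edges, hence has no $K_4$ minor and is series-parallel, so $\chi(\th_m)\le 3$; explicitly, colour $u$ and $v$ with $1$ and $2$ and then colour along each path within $\{1,2,3\}$, using colour $3$ only at the midpoint of each length-$2$ path. Since $\th_m$ is not bipartite, $\chi(\th_m)=3$. Finally $|E(\th_m)|=2(2r-2)+2m=4r-4+2m$ takes infinitely many distinct values as $m$ runs over the odd integers $\ge 3$, so the graphs $\th_m$ are pairwise non-isomorphic, and we obtain infinitely many tripartite $s$-bridge graphs with local antimagic chromatic number $3$, as required.

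The argument is almost entirely routine; the one step needing thought is strategic rather than computational — recognising that the one-point-union machinery of this subsection cannot by itself supply infinitely many examples at a fixed $s$, and that inserting a single odd-length path into a $K_{2,s}$-type graph repairs this, so that Theorem~\ref{thm-even+oddlength} delivers the upper bound while non-bipartiteness supplies the matching lower bound. The ancillary checks (simplicity, the explicit $3$-colouring, the fact that the list in Theorem~\ref{thm-chilath=2} contains only even path lengths, and infinitude via distinct edge counts) are immediate.
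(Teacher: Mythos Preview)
Your proof is correct and takes a genuinely different route from the paper's. The paper argues entirely via the one-point-union construction of this subsection: starting from $G=C((4r-2)^{[r-1]},2r-2)$ (for $r\ge 3$) or $G=C((2r)^{[(r-1)/2]},(2r-2)^{[(r+1)/2]})$ (for odd $r$), it merges one vertex per cycle, choosing $k$ black and $r-k$ white vertices so that the resulting $2r$-bridge graph is tripartite, and then reads off the three induced colours $x$, $y$, $ky+(r-k)x$ directly from the known local antimagic $2$-colouring of $G$, finishing with $\chi(G)=3$ for the lower bound. You instead stay within Section~\ref{sec-evenlength}: the family $\th_m=\th(2^{[2r-2]},m^{[2]})$ with odd $m\ge 3$ falls under Theorem~\ref{thm-even+oddlength}, and non-bipartiteness together with Lemma~\ref{lem-2part} supplies the matching lower bound.

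Your strategic observation is well taken and in fact identifies a weakness in the paper's argument. For a fixed $s=2r$ the one-point unions used have a fixed vertex set, so only finitely many bridge graphs can arise from merging; moreover, neither of the two admissible families covers $r=2$, so the case $s=4$ is not addressed at all. Your construction repairs both issues cleanly, yielding a genuinely infinite family for every even $s\ge 4$, at the modest cost of appealing to the recursive labelling of Theorem~\ref{thm-even+oddlength} rather than writing down the three colours by hand. The paper's approach, on the other hand, has the virtue of making the local antimagic $3$-colouring completely explicit and self-contained within the subsection.
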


\begin{proof} Consider the tripartite $2r$-bridge graphs obtained above. If   $G=C((4r-2)^{[r-1]}, 2r-2)$, $r\ge 3$,  we immediately have a tripartite $2r$-bridge graph that induces a local antimagic 3-coloring with vertex labels  $y=4r^2-2r$ and $x=4r^2-4r+1$ alternately beginning at vertex $u$, whereas the vertex $v$ has label $ky + (r-k)x  = k(4r^2-2r) + (r-k)(4r^2-4r+1)$. Similarly, if  $G=C((2r)^{[(r-1)/2]}, (2r-2)^{[(r+1)/2]})$, $r$ is odd, we immediately have a  tripartite $2r$-bridge graph that induces a local antimagic 3-coloring with vertex labels  $y=2r^2+r$ and $x=2r^2-r$ alternately beginning at vertex $u$ whereas the vertex $v$ has label $ky + (r-k)x = k(2r^2+r) + (r-k)(2r^2-r)$. Thus, $\chi_{la}(G)\le 3$. Since $\chi(G) = 3$, the theorem holds.  \end{proof}

\begin{example} Suppose $G=C(10,10,4)$. Suppose both $d_1=d(u,v_1)$, $d_2=d(u,v_2)$ are odd, and $d_3=d(u,v_3)$ is even, i.e., $k=1$.\\ We must have $(d_1,d_2,d_3)\in \{(1,3,2), (1,5,2), (3,3,2), (3,5,2), (5,5,2)\}$. So the tripartite 6-bridge graphs are $\th(1,2,2,3,7,9)$, $\th(1,2,2,5,5,9)$, $\th(2,2,3,3,7,7)$, $\th(2,2,3,5,5,7)$, $\th(2,2,5,5,5,5)$ respectively. A local antimagic 3-coloring for $\th(2,2,5,5,5,5)$ with colors 25, 30, 80 is given in Figure \ref{Fig2.}.

%\centerline{\epsfig{file=th225555.eps, width=3.5cm}}
%\rsq
\end{example}
\vspace*{-0.5cm}
\begin{figure}[h]
\centering
\includegraphics[scale=0.5]{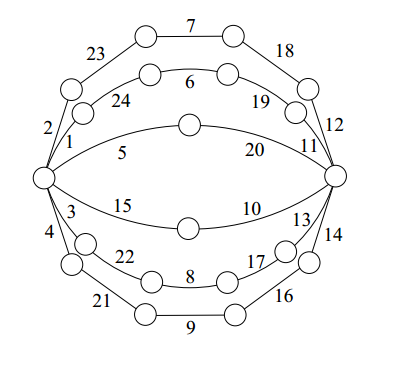}
\vspace*{-0.7cm}
\caption{$\th(2,2,5,5,5,5)$}\label{Fig2.}
\end{figure}

\begin{conjecture} If $\th$ is a bipartite bridge graph with both partite sets of same size, then $\chi_{la}(\th) = 3$. \end{conjecture}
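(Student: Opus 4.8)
The plan is to first recognise which graphs the hypothesis actually describes. A bridge graph $\th(a_1,\dots,a_s)$ is bipartite exactly when all the $a_i$ share a common parity. If they are all even, then $u$ and $v$ lie in the same part and counting internal vertices gives part sizes $\tfrac12\sum a_i$ and $\tfrac12\sum a_i-(s-2)$, so ``balanced'' forces $s=2$, i.e.\ an even cycle, for which $\chi_{la}=3$ is classical. If they are all odd, then $u$ and $v$ lie in opposite parts, each path contributes $\tfrac{a_i-1}{2}$ internal vertices to each part, and the graph is automatically balanced. So the conjecture reduces to showing that $\chi_{la}(\th(a_1,\dots,a_s))=3$ whenever $a_1,\dots,a_s$ are odd and $s\ge 2$. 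The lower bound $\chi_{la}\ge 3$ is then free: a connected bipartite graph has a unique proper $2$-colouring, namely its bipartition, so a local antimagic $2$-colouring would realise that bipartition, whose colour classes are unequal by Lemma~\ref{lem-2part}, contradicting balance.

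For the upper bound I would push the ``$\diamond$-gadget'' machinery developed in Section~\ref{sec-evenlength} as far as it will go (the balanced bipartite bridge graphs coming from $2$-colourable one-point unions of cycles are already covered by an earlier theorem of Section~\ref{sec-induced}). The idea is to label every $(u,v)$-path of odd length $\ell$ by an $I^{*}$- or $D^{*}$-type sequence, i.e.\ an ordered sequence whose consecutive entries sum alternately to two fixed values $p$ and $p+d$, using the \emph{same} pair $\{p,p+d\}$ on every path. Then every internal vertex receives colour $p$ or $p+d$, while $f^{+}(u)$ and $f^{+}(v)$ are the sums of the first, respectively last, entries of all these sequences. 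By Lemmas~\ref{lem-diamond}, \ref{lem-diamond-1} and Corollary~\ref{cor-diamond}, such a path of length $\ell$ labelled increasingly contributes $+\tfrac{\ell-1}{2}d$ to $f^{+}(v)-f^{+}(u)$ and decreasingly contributes $-\tfrac{\ell-1}{2}d$, while the low and high index-intervals these sequences occupy can be nested inward from the two ends of $[1,q]$ so that their union is exactly $[1,q]$ --- this is what the labelling steps in the proof of Theorem~\ref{thm-even+oddlength} accomplish when the lengths occur in equal pairs, and what the sequences $S_1,S_2,S_3$ accomplish in Section~\ref{sec-induced}.

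The strategy therefore succeeds whenever the multiset $\{\tfrac{a_i-1}{2}:1\le i\le s\}$ can be split into two blocks of equal sum: choosing the increasing/decreasing assignment along that split makes $f^{+}(u)=f^{+}(v)$, and together with the interior colours $p,p+d$ this is a local antimagic $3$-colouring (and $f^{+}(u)=f^{+}(v)$ is easily checked to avoid $p$ and $p+d$, which are of size about $q$ whereas $f^{+}(u)$ is a sum of $s$ labels). In particular this recovers Theorem~\ref{thm-even+oddlength} (equal pairs of lengths give an obvious split), recovers the families of Section~\ref{sec-induced} built from special sequences, and handles many odd-$s$ cases via the pattern ``one long path carries the two extreme intervals of $[1,q]$, and the remaining, now even, family is paired off inside the middle interval''.

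The real obstacle is the residual case in which $\{\tfrac{a_i-1}{2}\}$ admits \emph{no} equal split; the smallest honest instance is $\th(3,5,5)$, where $\tfrac{a_i-1}{2}=1,2,2$ has odd total $5$, so no split exists and the construction above is forced to yield $f^{+}(u)\ne f^{+}(v)$, hence a fourth colour --- and this graph genuinely needs an argument, since $\chi_{la}\ne 2$ by Theorem~\ref{thm-chilath=2}. Finishing the conjecture would require a genuinely different labelling for such graphs, for instance one in which the internal vertices are allowed to use all three colours (so that consecutive edge-sums need not alternate between only two values), combined with an induction on $s$ that peels off pairs or triples of paths in the manner of the three-path and five-path constructions above, the small base cases being checked directly. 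A separate wrinkle is the sub-case $a_1=1$, in which $u\sim v$ so one in fact \emph{wants} $f^{+}(u)\ne f^{+}(v)$: there the otherwise-symmetric chain must be deliberately unbalanced by one unit (for instance by replacing one equal gadget pair with an unequal one) while still confining all interior colours to a $2$-element set. I expect this non-splittable residue to be the genuine difficulty; the rest is careful but routine interval bookkeeping.
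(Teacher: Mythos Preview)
The statement you are attempting to prove is stated in the paper as a \emph{Conjecture}, not a theorem: the paper offers no proof, and the conjecture is left open. There is therefore nothing on the paper's side to compare your attempt against.

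Your structural reduction is correct and worth recording: a bridge graph is bipartite iff all $a_i$ share a parity; in the all-even case balance forces $s=2$ (an even cycle), and in the all-odd case balance is automatic. The lower bound $\chi_{la}\ge 3$ then follows from Lemma~\ref{lem-2part}, exactly as you say. But your upper-bound strategy is, by your own admission, incomplete: the $I^*/D^*$ scheme requires the multiset $\{\tfrac{a_i-1}{2}\}$ to admit an equal split (or, when $a_1=1$, a near-equal one), and you correctly isolate $\th(3,5,5)$ as a smallest obstruction with no such split. The vague plan to ``allow internal vertices to use all three colours'' and ``peel off pairs or triples by induction'' is not an argument; it is a description of the remaining difficulty. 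In short, what you have written is a sound outline of the easy direction and of the partial progress already contained in Sections~\ref{sec-evenlength} and~\ref{sec-induced}, together with an honest identification of the obstruction---but it is not a proof of the conjecture, and the paper does not claim one either.
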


\section*{Acknowledgement}
The third author acknowledges the support of the National Science Foundation of 
China (Grant Number 12101347) and the National Science Foundation of Shandong Province of China (Grant Number ZR2021QA085).

\end{document}